\documentclass[a4paper,10pt,reqno]{amsart}

\usepackage{amsthm,amsmath,amssymb}
\usepackage{amsfonts}
\usepackage[T1]{fontenc}
\usepackage{enumitem}
\usepackage{caption}
\usepackage{graphicx}
\usepackage[breaklinks]{hyperref} 
\usepackage{comment}
\usepackage{pgf,tikz}
\usepackage{ifthen}
\usepackage{color}
\usetikzlibrary{arrows.meta,decorations.pathmorphing,decorations.pathreplacing,calc,intersections}
\usepackage{pgfplots}
\usepgfplotslibrary{fillbetween}
\tikzset{every picture/.style={line width=0.55pt}}%
\tikzset{>={Classical TikZ Rightarrow[scale=1.66]}}
\tikzset{
  show curve controls/.style={
	postaction={
	  decoration={
		show path construction,
		curveto code={
		  \draw [blue,opacity=0.85, line width=0.2pt] 
			(\tikzinputsegmentfirst) -- (\tikzinputsegmentsupporta)
			(\tikzinputsegmentlast) -- (\tikzinputsegmentsupportb);
		  \fill [red, opacity=0.45] 
			(\tikzinputsegmentsupporta) circle [radius=1pt]
			(\tikzinputsegmentsupportb) circle [radius=1pt];
		}
	  },
	  decorate
}}}
\definecolor{linkcolor}{rgb}{0,0,0.675}%
\hypersetup{colorlinks=true,linkcolor=linkcolor,citecolor=linkcolor,urlcolor=linkcolor,bookmarksdepth=2}%
\newtheorem{theorem}{Theorem}[section]
\newtheorem{lemma}[theorem]{Lemma}
\newtheorem{corollary}[theorem]{Corollary}
\newtheorem{proposition}[theorem]{Proposition}
\theoremstyle{definition}
\newtheorem{remark}[theorem]{Remark}
\newtheorem{definition}[theorem]{Definition}
\newtheorem{example}[theorem]{Example}

\setitemize{topsep=0pt plus 3pt,itemsep=0pt,leftmargin=25pt,listparindent=\parindent}
\setenumerate{topsep=0pt plus 3pt,itemsep=0pt,leftmargin=25pt,listparindent=\parindent}
\setitemize{topsep=0pt plus 3pt,itemsep=0pt,leftmargin=25pt,listparindent=\parindent}
\setenumerate[1]{label=\normalfont(\arabic{enumi})}
\setenumerate[2]{label=\normalfont(\alph{enumii}),leftmargin=25pt}
\setdescription{font=\sffamily, itemsep=0pt,leftmargin=8pt,listparindent=\parindent}

\makeatletter\@addtoreset{equation}{section}\makeatother
\newcommand*{\image}{\operatorname{im}}
\newcommand*{\Pic}{\operatorname{Pic}}
\renewcommand*{\div}{\operatorname{Div}}
\newcommand*{\xto}[1]{\xrightarrow{\,#1\,}}

\makeatletter
\newcommand*{\da@rightarrow}{\mathchar"0\hexnumber@\symAMSa 4B }
\newcommand*{\da@leftarrow}{\mathchar"0\hexnumber@\symAMSa 4C }
\newcommand*{\xdashrightarrow}[2][]{%
  \mathrel{%
	\mathpalette{\da@xarrow{#1}{#2}{}\da@rightarrow{\,}{}}{}%
  }%
}
\newcommand{\xdashleftarrow}[2][]{%
  \mathrel{%
	\mathpalette{\da@xarrow{#1}{#2}\da@leftarrow{}{}{\,}}{}%
  }%
}
\newcommand*{\da@xarrow}[7]{%
  \sbox0{$\ifx#7\scriptstyle\scriptscriptstyle\else\scriptstyle\fi#5#1#6\m@th$}%
  \sbox2{$\ifx#7\scriptstyle\scriptscriptstyle\else\scriptstyle\fi#5#2#6\m@th$}%
  \sbox4{$#7\dabar@\m@th$}%
  \dimen@=\wd0 %
  \ifdim\wd2 >\dimen@
	\dimen@=\wd2 %
  \fi
  \count@=2 %
  \def\da@bars{\dabar@\dabar@}%
  \@whiledim\count@\wd4<\dimen@\do{%
	\advance\count@\@ne
	\expandafter\def\expandafter\da@bars\expandafter{%
	  \da@bars
	  \dabar@ 
	}%
  }%
  \mathrel{#3}%
  \mathrel{%
	\mathop{\da@bars}\limits
	\ifx\\#1\\%
	\else
	  _{\copy0}%
	\fi
	\ifx\\#2\\%
	\else
	  ^{\copy2}%
	\fi
  }%
  \mathrel{#4}%
}
\makeatother

\renewcommand*{\mod}{\operatorname{mod\,}}
\newcommand*{\ZZ}{\mathbb{Z}}

\newcommand*{\CC}{\mathbb{C}}
\newcommand*{\PP}{\mathbb{P}}
\newcommand*{\FF}{\mathbb{F}}
\newcommand*{\Sing}{\op{Sing}}
\renewcommand*{\emptyset}{\varnothing}

\let\op\operatorname
\title{Maximally nodal sextic surfaces and linear determinantal representations}
\author{Yonghwa Cho}

\address{Department of Mathematics, Gyeongsang National University, 501, Jinju-daero, Jinju-si, Gyeongsangnam-do 52828, Republic of Korea.}
\email{yhcho@gnu.ac.kr}

\thanks{}

\subjclass[2020]{14J17, 14F06, 14Q10}
\date{\today}
\keywords{Nodal surfaces, Half-even sets of nodes, Extended codes, Ulrich sheaves}

\begin{document}
	\setlength{\intextsep}{0.5\baselineskip plus 2pt minus 2pt}
	\abovedisplayshortskip=0.5\baselineskip%
	\belowdisplayshortskip=0.5\baselineskip%
	\abovedisplayskip=0.4\baselineskip%
	\belowdisplayskip=0.4\baselineskip%
\begin{abstract}
    We prove that every maximally nodal sextic surface\,(with $65$ nodes) $X \subset \PP_\CC^3$ contains a symmetric half-even set of nodes of cardinality $35$. It follows that the associated half-quadratic sheaf is the cokernel of a symmetric $6 \times 6$ matrix of linear forms, yielding a linear determinantal representation of $X$. In particular, after a suitable Serre twist, the half-quadratic sheaf is an Ulrich sheaf of rank $1$. As an example, we exhibit an explicit $6 \times 6$ matrix of linear forms whose determinant defines the Barth sextic surface.
\end{abstract}

\maketitle
\section{Introduction}

Let $X$ be a complex projective surface in $\PP^3$ defined by a homogeneous polynomial $F \in \CC[x_0,x_1,x_2,x_3]$. We call $X$ a \emph{nodal surface} if each singular point of $X$ is a node; or equivalently, at each singular point $P \in X$ we have
\[
    \op{rank} \left( \frac{\partial^2 F}{\partial x_i \partial x_j} \right)_P = 3.
\]
Once the degree $d = \deg X$ is fixed, the number of nodes on $X$ is bounded. Let $\mu(d)$ denote the maximal number of nodes that a nodal surface of degree $d$ can contain. By Miyaoka\,\cite{Miyaoka}, we have
\begin{equation}\label{eq: Miyaoka ineq}
    \mu(d) \leq \frac 49 d(d-1)^2.
\end{equation}
The precise values of $\mu(d)$ are known only for $d \leq 6$.\,\cite{Barth,BeauvilleNodal,JaffeRuberman,Togliatti,Wahl}
\[
    \begin{array}{c|c|c|c|c|c}
         d&  2& 3& 4& 5& 6\\ \hline
         \mu(d) & 1 & 4 & 16 & 31 & 65
    \end{array}
\]
In particular, the inequality \eqref{eq: Miyaoka ineq} is not optimal except when $d=4$. It is a long-standing question to determine $\mu(d)$, dating back to Kummer in the 19th century.

In this article, we focus on the sextic surfaces with maximal number\,($=65$) of nodes. There exists an 18-dimensional smooth and equidimensional family of such sextics\,\cite{Dimca, Kloosterman}, $15$ dimensions of which arise from projective equivalences. The first (and perhaps most famous) example is due to Barth, which is defined by the sextic equation
\begin{equation}\label{eq: Barth sextic}
    F(x_0,x_1,x_2,x_3) = (\tau^2x_1^2 - x_2^2)(\tau^2 x_2^2 - x_3^2)(\tau^2 x_3^2 - x_1^2) - \frac {2\tau+1}{4}x_0^2 (x_0^2 - x_1^2-x_2^2-x_3^2)^2,
\end{equation}
where $\tau = \frac12(1+\sqrt 5)$. Apart from Barth's example, maximally nodal sextic surfaces do not seem to have been studied extensively. In recent work with collaborators\,\cite{CatChoKie}, we study the extended code of a maximally nodal sextic surface and prove that they are all isomorphic. In \cite{CatChoKie}, we also prove that every half-even set of cardinality $31$ is symmetric. Consequently, we show that the defining equation of such a surface can be expressed as a determinant of a symmetric matrix of homogeneous forms of diagonal degrees $(1,1,1,3)$. 
For details of the previous work and related preliminaries, see Sections~\ref{sec: half-even sets} and \ref{sec: extended code of 65-nodal sextic}.

The main result of this article concerns linear determinantal representations. We prove that for every maximally nodal sextic surface $X \subset \PP^3$, there exists a symmetric half-even set of cardinality $35$. Given the material in Sections~\ref{sec: half-even sets} and \ref{sec: extended code of 65-nodal sextic}, we may restate the main result as follows.
\begin{theorem}\label{thm: main}
    Let $X \subset \PP^3$ be a sextic surface whose singular set consists of 65 nodes. Then, there exists a coherent sheaf $\mathcal F$ on $X$ which fits into the short exact sequence
    \[
        0 \to \mathcal O_{\PP^3}(-4)^{\oplus6} \xto{\,A\,} \mathcal O_{\PP^3}(-3)^{\oplus 6} \to \mathcal F \to 0,
    \]
    where $A$ is a symmetric matrix of linear forms on $\PP^3$. In particular, the surface $X = \op{Supp}\mathcal F$ is defined by the equation $(\det A=0)$.
\end{theorem}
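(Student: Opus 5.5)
The plan is to reduce the statement to the existence of a symmetric half-even set of nodes of cardinality $35$, and then to prove that existence using the structure of the extended code from \cite{CatChoKie}. Recall the Casnati--Catanese correspondence for half-even sets. Let $\Delta \subset \Sing X$ be a half-even set, so that $\sum_{P\in\Delta} E_P \equiv 2L + H$ on a minimal resolution $\widetilde X$, with $H$ the hyperplane class. Then $\Delta$ yields a half-quadratic sheaf $\mathcal F$ on $X$ with a symmetric pairing $\mathcal F\otimes\mathcal F\to\omega_X(\cdot)$. If $\Delta$ is \emph{symmetric}, this pairing is realized by a symmetric resolution
\[
0\to \bigoplus_i\mathcal O_{\PP^3}(-a_i-?)\xto{A}\bigoplus_i\mathcal O_{\PP^3}(a_i-?)\to\mathcal F\to 0,
\]
whose degrees $a_i$ are determined by $h^0(\mathcal F(m))$ for small $m$. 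Two things then have to be checked.

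First, for $|\Delta|=35$ all summands have the same degree, so $A$ is a $6\times6$ matrix of linear forms. To see this, compute $\chi(\mathcal F(m))$ by Riemann--Roch on $\widetilde X$, using that each node of $\Delta$ contributes $-\tfrac12 E_P$ to $L$ and hence $L^2 = (H^2\text{-terms}) - |\Delta|/2\cdot(\ldots)$. With the twist normalized as in the statement, the vanishing $h^0(\mathcal F(2))=0$ and $h^0(\mathcal F(3))=6$, together with the symmetry, force the splitting type $\mathcal O(-3)^6 \leftarrow \mathcal O(-4)^6$. This is the same kind of numerical bookkeeping that produced the $(1,1,1,3)$ representation for $|\Delta|=31$ in \cite{CatChoKie}, so I would take it from Section~\ref{sec: half-even sets}.

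Second, and this is the main step, a symmetric half-even set of cardinality $35$ must exist. By \cite{CatChoKie} the extended code $\mathcal K\subset\FF_2^{65}\oplus\FF_2$ of a $65$-nodal sextic is unique up to isomorphism. Its weight enumerator is known, so I would read off a half-even codeword $\Delta$ of weight $35$, whose existence follows from that classification. The remaining issue is symmetry. A half-even set is symmetric exactly when the associated quadratic-form invariant vanishes, namely $h^0(\widetilde X, L+?)$ being even, or equivalently $\epsilon(\Delta)=0$ in the Casnati--Catanese criterion.

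I would first try a combinatorial route. We know that every $31$-set is symmetric, and the symmetry obstruction is additive (quadratic) under the code operations: for half-even $\Delta$ and even $\Gamma$, $\epsilon(\Delta+\Gamma)=\epsilon(\Delta)+q(\Gamma)+\langle\Delta,\Gamma\rangle$, where $q(\Gamma)$ is determined by $|\Gamma|/8 \bmod 2$. The plan is to write some weight-$35$ half-even word as $\Delta_{31}+\Gamma$, with $\Gamma$ even of weight $24$, $32$ or $40$ and with controlled intersection. The known symmetry of $\Delta_{31}$ then propagates. Since the code is unique, exhibiting one such configuration in the explicit code, for instance via the Barth sextic's icosahedral symmetry, suffices for all $X$.

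The hardest step is this symmetry verification: making the parity-propagation rule precise and checking it on the code. If the additive rule proves delicate, the fallback is an Euler-characteristic argument. In that argument, a non-symmetric $35$-set would give $h^0(\mathcal F(3))$ of the wrong parity, or would force a non-symmetric resolution with degree pattern incompatible with $\mathcal F\cong\mathcal{H}om(\mathcal F,\omega_X(\cdot))$, and this would rule it out directly. Finally, $\det A$ vanishes exactly on $\op{Supp}\mathcal F=X$ with degree $6$, so it defines $X$.
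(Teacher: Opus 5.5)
There is a genuine gap in your main step. Your reduction to the existence of a symmetric half-even set $\mathcal N$ with $\lvert\mathcal N\rvert=35$ is the right one, and once symmetry is known the type is indeed forced (e.g.\ by $\chi(\mathcal F)=6$). But the vanishing $h^0(X,\mathcal F(2))=0$, which you list as numerical bookkeeping, is not bookkeeping. For $\lvert\mathcal N\rvert=35$ it is \emph{equivalent} to symmetry, and proving it for some $\mathcal N$ is the entire content of the theorem.

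The mechanism you propose for symmetry is not available. By Theorem~\ref{thm: Casnati-Catanese}, \emph{every} half-even set gives a symmetric $\varphi\colon\mathcal E^\vee(-7)\to\mathcal E$ and a (twisted) self-dual $\mathcal F$. Here ``symmetric'' means that $\mathcal E$ splits. By Horrocks this is the vanishing $h^1(\tilde X,\mathcal O_{\tilde X}(m\tilde H-L))=0$ for $m=2,3$ (Proposition~\ref{prop: aCM criterion}). Casnati--Catanese give no parity invariant $\epsilon(\Delta)$ detecting this. Your rule $\epsilon(\Delta+\Gamma)=\epsilon(\Delta)+q(\Gamma)+\langle\Delta,\Gamma\rangle$ is an unproved assertion about a cohomological vanishing condition, and nothing in your sketch indicates why it would hold. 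So the symmetry of the $31$-sets does not propagate to anything.

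Your fallback fails outright. $\chi(\mathcal F(m))$ depends only on $\lvert\mathcal N\rvert$, and self-duality holds for every half-even set. Moreover, a $35$-set with $h^0(\mathcal F(2))\neq0$ still has $h^0(X,\mathcal F(3))=6$; this is what Corollary~\ref{cor: key long exact seq} shows. So neither Euler characteristics nor the parity of $h^0(\mathcal F(3))$ can distinguish the two cases.

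Checking symmetry on the Barth sextic would also not transfer to all $X$ just because the codes are isomorphic. The condition $h^0(\mathcal F(2))=0$ is only open in families, so you need an argument that uses nothing but code-level data.

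That is what the paper supplies. For $\lvert\mathcal N\rvert=35$ one has $\chi(\mathcal F(1))=\chi(\mathcal F(2))=0$ and $h^p(\mathcal F(2))=h^{2-p}(\mathcal F(1))$, so symmetry follows from $h^0(X,\mathcal F(2))=0$ (Proposition~\ref{prop: symmetry criterion}). The paper then argues by contradiction:
\begin{enumerate}
\item Assuming $h^0(\mathcal F(2))\neq0$, Endrass' semistability results give $h^0(\mathcal F(2))=1$.
\item A restriction-to-general-planes argument gives $h^1(\mathcal F(3))=0$.
\item Hence for \emph{every} plane $\bar H$, the restriction $H^0(X,\mathcal F(3))\to H^0(\bar H,\mathcal F(3)|_{\bar H})$ between $6$-dimensional spaces has rank exactly $5$.
\item Take the specific word $w=w_1+w_2+w_3+w_5+w_{17}$, which can be written in six ways as a sum of five weight-$16$ words. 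The products of the corresponding plane half-sections $s_j$ give six sections of $\mathcal F(3)$.
\item Let $\bar H$ be the plane cutting $\mathcal N_1$. Reading off from the codewords which nodes of $\bar H$ lie on the curves $C_j$, one checks that the six restrictions to $\bar H$ are linearly independent. This contradicts the rank being $5$.
\end{enumerate}
Nothing in your proposal plays the role of this explicit rank computation.
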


\begin{remark}
Compared to the previous results in \cite{CatChoKie}, Theorem~\ref{thm: main} reveals another interesting aspect; the sheaf $\mathcal F(3)$ is an Ulrich sheaf of rank $1$ on $X$. 

Among the various well-known equivalent definitions of Ulrich sheaves, we take the one from \cite[Propposition~2.1(c)]{EisenbudSchreyer}, namely, a sheaf $\mathcal U$ on a projective variety $Y^n \subseteq \PP := \PP^{n+c}$ is Ulrich if it admits a linear free resolution of length $c$:
\[
    0 \to \mathcal O_\PP(-c)^{\oplus \beta_c} \to \mathcal O_\PP(-c+1)^{\oplus \beta_{c-1}} \to \dots \to \mathcal O_\PP(-1)^{\oplus \beta_1} \to \mathcal O_\PP^{\oplus \beta_0} \to \mathcal U \to 0.
\]
It is interesting and widely open question to ask whether every (nonsingular) projective variety admits an Ulrich sheaf. On the other hand, the existence of Ulrich line bundles is often unexpected. For instance, if $\Pic Y$ is generated by $\mathcal O_Y(1)$, then $Y$ does not admit an Ulrich line bundles unless $Y \subset \PP$ is a linear subvariety\,(see \cite[Section~4]{BeauvilleUlrich}). In particular, general sextic surfaces in $\PP^3$ do not admit Ulrich line bundles as we can see in the following. By Noether-Lefschetz thoerem, a very general sextic surface $Y \subset \PP^2$ satisfies $\Pic Y = \ZZ \, \mathcal O_Y(1)$, hence there exists no Ulrich line bundle on $Y$. By \cite[Proposition~2.1]{BeauvilleUlrich}, $Y$ is not linearly determinantal. Thus, a general sextic surface in $\PP^3$ is not linearly determinantal, hence cannot admit Ulrich line bundles. Theorem~\ref{thm: main} says that, unlike the other general sextics, maximally nodal sextic surfaces always admit Ulrich line bundles.
\end{remark}


\bigskip
\section{Half-even sets of nodes}\label{sec: half-even sets}
Thoughout this section, we fix the following notations:
Let $X = X_d \subset \PP^3$ be a nodal surface of degree $d$, and let $\Sing X$ be the singular set of $X$. For a minimal resolution $\pi \colon \tilde X \to X$ and a point $P \in \Sing X$, we denote by $E_P \in \Pic \tilde X$ the $(-2)$-curve over $P$. For a subset $\mathcal N \subset \Sing X$, $E_\mathcal N = \sum_{P \in \mathcal N} E_P$. Let $\tilde H \in \lvert \pi^*\mathcal O_{X}(1) \rvert$ be the pullback of a hyperplane section.

\smallskip
\subsection{Symmetric half-even sets of nodes}

\begin{definition}
A subset $\mathcal N \subset \Sing X$ is said to be \emph{even} if $E_\mathcal N \in 2\Pic \tilde X$. If $d$ is even and $\tilde H + E_\mathcal N \in 2 \Pic \tilde X$, we say that the subset $\mathcal N$ is \emph{half-even}.
\end{definition}

Note that if $\mathcal N$ is a half-even set, then
\[
    (\tilde H + E_\mathcal N)^2 = d + (-2)\lvert N\rvert \equiv 0\ (\mod 4),
\]
hence the degree of $X$ must be even. Let $\tilde p \colon \tilde S \to \tilde X$ be a double cover branched over $\tilde H + E_\mathcal N$. We have an eigensheaf decomposition
\[
\tilde p_* \mathcal O_{\tilde S} = \mathcal O_{\tilde X} \oplus \mathcal O_{\tilde X}(-L), \text{ where } L = \tfrac 12 (\tilde H + E_\mathcal N).
\]
The following theorem is one of the key ingredients of this paper.

\begin{theorem}[{\cite[Theorem~0.3 and Proposition~3.1]{CasCat}}]\label{thm: Casnati-Catanese}
    The sheaf $\mathcal F:= \pi_* \mathcal O_{\tilde X}(-L)$ admits a two term locally free resolution on $\PP^3$. More precisely, there exists a locally free sheaf $\mathcal E$ on $\PP^3$ which fits into the short exact sequence
    \begin{equation}\label{eq: Casnati-Catanese resolution}
        0 \to \mathcal E^\vee (-d-1) \xto{\varphi} \mathcal E \to \mathcal F \to 0.
    \end{equation}
    Moreover, the map $\varphi$ is symmetric.
\end{theorem}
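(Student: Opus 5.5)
The statement to be proved is the Casnati--Catanese theorem (Theorem~\ref{thm: Casnati-Catanese}). I plan to derive it from Beilinson-type/Horrocks arguments together with relative duality for the double cover $\tilde p\colon \tilde S \to \tilde X$.

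\emph{Step 1: $\mathcal F$ is Cohen--Macaulay.} Write $S := \op{Spec}_X \pi_*\tilde p_*\mathcal O_{\tilde S}$, the Stein factorization of $\pi\circ\tilde p$. Then $p\colon S\to X$ is a finite double cover with $p_*\mathcal O_S = \mathcal O_X \oplus \mathcal F$. Over a node in $\mathcal N$, the curve $E_P$ lies in the branch locus; the preimage of $E_P$ is a $(-1)$-curve on $\tilde S$, which contracts to a smooth point. Over a node not in $\mathcal N$, the cover is étale near $E_P$, giving two $A_1$ points. In either case $S$ has rational singularities, so it is Cohen--Macaulay. It follows that $\mathcal F$ is a maximal Cohen--Macaulay sheaf on $X$, hence of projective dimension $1$ on $\PP^3$ by Auslander--Buchsbaum. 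Equivalently, $H^1(X,\mathcal F(m))=0$ for all $m$, because $\mathcal F$ is reflexive of rank $1$ on a surface. For the vanishing I will compute $H^1(\tilde X,\mathcal O(-L+m\tilde H))$ via Kawamata--Viehweg and Serre duality, using $R^1\pi_*\mathcal O(-L)=0$, which holds since $-L\cdot E_P\in\{0,1\}$.

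\emph{Step 2: the two-term resolution.} Take a minimal graded resolution $0\to \mathcal E_1\to\mathcal E_0\to\mathcal F\to0$, where $\mathcal E_0$ is a sum of line bundles covering generators of $\bigoplus_m H^0(\mathcal F(m))$. The kernel $\mathcal E_1$ is locally free because $\mathcal F$ is Cohen--Macaulay of codimension $1$. It remains to identify $\mathcal E_1\cong\mathcal E_0^\vee(-d-1)$. Apply $\mathcal{H}om(-,\omega_{\PP^3})$ to obtain
\[
0\to \mathcal E_0^\vee(-4)\to\mathcal E_1^\vee(-4)\to \mathcal Ext^1(\mathcal F,\omega_{\PP^3})\to 0 .
\]
Here $\mathcal Ext^1(\mathcal F,\omega_{\PP^3})=\mathcal Hom_X(\mathcal F,\omega_X)$ and $\omega_X=\mathcal O_X(d-4)$.

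\emph{Step 3: self-duality and symmetry.} This is the main point. I will show $\mathcal Hom_X(\mathcal F,\mathcal O_X)\cong\mathcal F(1)$, i.e.\ $\mathcal F^\vee\cong\mathcal F(1)$ as reflexive sheaves. This follows because $\mathcal F\otimes\mathcal F\to\mathcal O_X(-1)$ is induced by the multiplication of the algebra $p_*\mathcal O_S$: $\mathcal O(-L)^{\otimes 2}=\mathcal O(-\tilde H-E_{\mathcal N})\to\mathcal O(-\tilde H)$. This pairing is perfect away from the nodes, hence perfect for reflexive rank-one sheaves on a normal surface. Consequently $\mathcal Hom(\mathcal F,\omega_X)\cong\mathcal F(d-3)$. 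Twisting the dual sequence gives a second minimal resolution of $\mathcal F$,
\[
0\to\mathcal E_0^\vee(-d-1)\to\mathcal E_1^\vee(-d-1)\to\mathcal F\to0 .
\]
By uniqueness of minimal resolutions, $\mathcal E_1^\vee(-d-1)\cong\mathcal E_0=:\mathcal E$ and $\mathcal E_1\cong\mathcal E^\vee(-d-1)$. Symmetry of $\varphi$ then follows from the symmetry of the pairing, which comes from commutativity of the algebra multiplication. I will use the standard argument that a symmetric perfect pairing $\mathcal F\times\mathcal F\to\mathcal O_X(-1)$ lifts to an isomorphism of complexes that can be chosen self-dual, by averaging $\psi$ with its transpose. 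Since $\tfrac12$ exists, this is allowed. One must check that the averaged map remains an isomorphism on minimal generators; this holds because the original comparison map and its transpose induce the same map on $\mathcal F$.

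The main obstacle I expect is this symmetrization. Lifting the pairing yields a chain map $\psi$ between the two resolutions, but only up to homotopy, and showing that $\varphi\psi$ can be made symmetric requires controlling the homotopy. I would first try Catanese's approach of lifting $\psi$ at the level of graded modules. After that I would verify that the averaged map $\tfrac12(\psi+\psi^t)$ still induces the identity on $\mathcal F$, and that it is invertible by Nakayama's lemma.
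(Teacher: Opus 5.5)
Your plan has a genuine gap: it sits in Step~1 and it breaks Step~3. The paper itself does not prove this statement; it quotes Casnati--Catanese.

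The local part of Step~1 is correct. $S$ has at worst $A_1$ points, so $\mathcal F$ is maximal Cohen--Macaulay and locally of projective dimension $1$ over $\PP^3$. The self-duality $\mathcal F^\vee\cong\mathcal F(1)$ in Step~3 is also correct. But the sentence ``equivalently, $H^1(X,\mathcal F(m))=0$ for all $m$'' is false. MCM is a local depth condition, automatic for a reflexive rank-one sheaf on a normal surface. The vanishing of $H^1_*(\mathcal F)$ is the global arithmetically Cohen--Macaulay condition, which is much stronger.

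Kawamata--Viehweg does not supply this vanishing. The computation in the proof of Proposition~\ref{prop: aCM criterion} gives $h^1(X,\mathcal F(m))=0$ only for $m\le -1$ or $m\ge d-2$, and nothing in the range $0\le m\le d-3$. For a concrete failure of the implication, take the analogous sheaf for the even set of all $16$ nodes of a Kummer quartic. The double cover is an abelian surface $A$, so $\mathcal F$ is MCM, yet $H^1(X,\mathcal F)\cong H^1(A,\mathcal O_A)\cong\CC^2$. Proposition~\ref{prop: aCM criterion} says precisely that the middle-range vanishing is \emph{equivalent} to $\mathcal N$ being symmetric, and the argument under $(\dagger)$ works with $h^1(X,\mathcal F(1))=1$.

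Once $H^1_*(\mathcal F)\neq 0$, Step~3 fails. Since $\mathcal E_0$ is split and surjects on $H^0_*$, you get $H^1_*(\mathcal E_1)=0$ and $H^2_*(\mathcal E_1)\cong H^1_*(\mathcal F)\neq 0$, so $\mathcal E_1$ is not split. In the dualized resolution the roles are swapped: the kernel $\mathcal E_0^\vee(-d-1)$ is split, while $H^1_*(\mathcal E_1^\vee(-d-1))\cong H^1_*(\mathcal F)\neq 0$. Hence $\mathcal E_1^\vee(-d-1)\not\cong\mathcal E_0$. Uniqueness of minimal resolutions concerns graded free resolutions of $H^0_*(\mathcal F)$, so it applies only when both bundles split, i.e.\ exactly in the aCM case.

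If your argument worked, it would give a \emph{split} $\mathcal E=\mathcal E_0$ for every half-even set. Every half-even set would then be symmetric, Proposition~\ref{prop: aCM criterion} would be empty, and Theorem~\ref{thm: main} would reduce to exhibiting a $35$-element half-even set. So what you have proves only the split (symmetric) case.

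The missing idea is an \emph{intermediate} bundle, equal to neither $\mathcal E_0$ nor $\mathcal E_1^\vee(-d-1)$. One needs $\mathcal E$ with $H^2_*(\mathcal E)=0$; this is property~(iii) of Casnati--Catanese invoked in the proof of Proposition~\ref{prop: aCM criterion}. Then $H^1_*(\mathcal E)\subset H^1_*(\mathcal F)$ carries one half of the finite-length module $H^1_*(\mathcal F)$. The quotient $H^2_*(\mathcal E^\vee(-d-1))$ carries the other half; it is dual to $H^1_*(\mathcal E)$ via $H^2(\mathcal E^\vee(m-d-1))\cong H^1(\mathcal E(d-3-m))^\vee$, compatibly with $H^1(\mathcal F(m))\cong H^1(\mathcal F(d-3-m))^\vee$. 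For a half-even set $d-3$ is odd, so $m\mapsto d-3-m$ fixes no degree. The part of $H^1_*(\mathcal F)$ in degrees $\ge\frac d2-1$ is then a submodule and the natural candidate.

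Constructing such an $\mathcal E$, and then lifting and symmetrizing the pairing for non-split bundles, is the real content of the theorem and is absent from your plan. There your averaging trick needs genuine control of the comparison maps.
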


\begin{definition}
    The half-even set $\mathcal N$ is \emph{symmetric} if $\mathcal E$ in \eqref{eq: Casnati-Catanese resolution} splits into a direct sum of line bundles. If $\mathcal N$ is symmetric, then the map $\varphi$ is given by a matrix of homogeneous forms. The diagonal degrees $(d_1,\dots,d_r)$\,($d_1\leq d_2\leq \dots \leq d_r$) of $\varphi$ is called the \emph{type} of $\mathcal N$.
\end{definition}

If $\mathcal N$ is symmetric half-even set of type $(d_1,\dots,d_r)$, then
\begin{equation}\label{eq: symmetric splitting type}
    \mathcal E \simeq \bigoplus_{i=1}^r \mathcal O_{\PP^3}\Bigl( -\frac{1}{2}(d+1-d_i) \Bigr).
\end{equation}
See \cite[Theorem~2.16]{CatBabbage}.

In \cite{CatChoKie}, we proved that if $X$ is a sextic surface with $65$ nodes, then every half-even set of nodes of cardinality $31$ is symmetric of type $(1,1,1,3)$, realizing $\mathcal F$ as the cokernel of a symmetric matrix
\[
    \mathcal O_{\PP^3}(-4)^{\oplus 3} \oplus \mathcal O_{\PP^3}(-5) \xto{\ \varphi\ } \mathcal O_{\PP^3}(-2)^{\oplus 3} \oplus \mathcal O_{\PP^3}(-2).
\]

By Horrocks splitting criterion, $\mathcal N$ is symmetric if and only if $H^p(\PP^3, \mathcal E(k))=0$ for each $p=1,2$ and $k$. We will show that the symmetric criterion reduces to the vanishing of cohomology groups of $\mathcal O_{\tilde X}(m\tilde H - L )$ for some $m$.

\begin{proposition}[{cf. \cite[Proposition~2.18]{CatBabbage} and \cite[Section~4.1]{CatChoKie}}]\label{prop: aCM criterion}
    Suppose $d \geq 4$. The half-even set $\mathcal N$ is symmetric if and only if
    \[
        h^1( \tilde X ,\, \mathcal O_{\tilde X}(  m\tilde H - L) ) = 0\ \ \text{for }\ \tfrac d2 -1 \leq m \leq d-3.
    \]
\end{proposition}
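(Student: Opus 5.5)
The plan is to translate Horrocks' criterion for $\mathcal E$ into a statement about the intermediate cohomology of $\mathcal F$, and then about $\tilde X$. After that, a Serre-duality symmetry together with Ramanujam vanishing cuts the range of twists down to $\tfrac d2-1\le m\le d-3$.

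\emph{Step 1: from $\mathcal E$ to $\mathcal F$.} By Horrocks' criterion, $\mathcal E$ splits if and only if $H^1_*(\mathcal E)=H^2_*(\mathcal E)=0$. I would twist \eqref{eq: Casnati-Catanese resolution} by $\mathcal O(k)$ and take the long exact sequence. In the Casnati--Catanese construction, $\mathcal E$ is built so that $H^0_*(\mathcal E)\to H^0_*(\mathcal F)$ is surjective, and I will use this from \cite{CasCat}. Using it, I expect
\[
H^1_*(\mathcal F)\cong H^1_*(\mathcal E)\oplus(\text{a piece of }H^2_*(\mathcal E^\vee)),
\]
or at least the following two facts:
\begin{itemize}
\item $H^1_*(\mathcal F)=0$ implies $H^1_*(\mathcal E)=0$ and $H^2_*(\mathcal E^\vee)=0$;
\item conversely, if $\mathcal E$ splits, then $H^1_*(\mathcal F)=0$.
\end{itemize}
Serre duality on $\PP^3$ gives $H^2(\mathcal E(k))\cong H^1(\mathcal E^\vee(-k-4))^\vee$. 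The symmetry of $\varphi$, which identifies the dual sequence with a twist of the original one, then lets me convert the vanishing of $H^2_*(\mathcal E)$ into a vanishing statement for $H^1_*$ of the same complex. The outcome should be: $\mathcal N$ is symmetric if and only if $H^1(X,\mathcal F(m))=0$ for all $m$.

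\emph{Step 2: pass to $\tilde X$.} For each $P\in\mathcal N$ we have $L\cdot E_P=-1$, so $\mathcal O_{\tilde X}(-L)|_{E_P}\cong\mathcal O_{\PP^1}(1)$. For nodes $P\notin\mathcal N$ the restriction is trivial. In both cases $R^1\pi_*\mathcal O_{\tilde X}(m\tilde H-L)=0$. The Leray spectral sequence and the projection formula then give
\[
H^1(X,\mathcal F(m))\cong H^1(\tilde X,\mathcal O_{\tilde X}(m\tilde H-L)).
\]

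\emph{Step 3: reduce the range of $m$.} This step has two parts.
\begin{itemize}
\item \emph{Small $m$.} For $m\le 0$ the divisor $L-m\tilde H$ is nef and big: $L$ is nef since $2L=\tilde H+E_\mathcal N$ and $L\cdot E_P\ge -1$ needs care, so I would instead argue directly with $L^2=\tfrac14(d-2|\mathcal N|)$ plus $\tilde H$. Ramanujam/Kawamata--Viehweg vanishing then gives $h^1(m\tilde H-L)=0$.
\item \emph{Symmetry.} Since $K_{\tilde X}=(d-4)\tilde H$, Serre duality gives
\[
h^1(m\tilde H-L)=h^1\bigl((d-4-m)\tilde H+L\bigr)=h^1\bigl((d-3-m)\tilde H-L+E_\mathcal N\bigr).
\]
Let $D=(d-3-m)\tilde H-L$. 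Peeling off the curves $E_P$ one at a time, each restriction $\mathcal O_{E_P}(D+\dots)$ has degree $1-2=-1$ and hence no cohomology. So $h^1(D+E_\mathcal N)=h^1(D)$, i.e. $h^1(m\tilde H-L)=h^1((d-3-m)\tilde H-L)$.
\end{itemize}
This symmetry $m\leftrightarrow d-3-m$ sends $m\le 0$ to $m\ge d-3$, and sends $[1,\tfrac d2-2]$ onto $[\tfrac d2-1,d-4]$. Hence all vanishings follow from those in $\tfrac d2-1\le m\le d-3$; the endpoint $d-3$ is also covered by $m=0$, so including it is harmless.

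The main obstacle is Step 1. Horrocks needs $H^2_*(\mathcal E)$ as well as $H^1_*(\mathcal E)$, while the surface $\mathcal F$ only directly controls an $H^1$. Closing this gap requires exploiting the self-duality $\mathcal{E}xt^1(\mathcal F,\omega_{\PP^3})\cong\mathcal F(\ast)$, which comes from $2L\equiv\tilde H+E_\mathcal N$, together with the surjectivity on global sections built into the construction in \cite{CasCat}. I would follow the argument of \cite[Proposition~2.18]{CatBabbage} closely there.
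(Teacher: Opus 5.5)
Your architecture matches the paper's: Horrocks, then $H^1_*(\mathcal F)$, then $h^1$ on $\tilde X$, then vanishing outside a window, then folding by $m\leftrightarrow d-3-m$. Step~2 and the symmetry half of Step~3 are fine.

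\textbf{The gap in Step 1.} The direction ``$H^1_*(\mathcal F)=0\Rightarrow\mathcal E$ splits'' is not established, and the mechanism you propose cannot establish it. Surjectivity of $H^0_*(\mathcal E)\to H^0_*(\mathcal F)$ together with symmetry of $\varphi$ does not control $H^1_*(\mathcal E)$ or $H^2_*(\mathcal E)$. To see this, start from any case with $H^1_*(\mathcal F)=0$. Replace $\mathcal E$ by $\mathcal E\oplus\mathcal G\oplus\mathcal G^\vee(-d-1)$ with $\mathcal G=\Omega^1_{\PP^3}$, and replace $\varphi$ by $\varphi\oplus\sigma$, where $\sigma\colon\mathcal G^\vee(-d-1)\oplus\mathcal G\to\mathcal G\oplus\mathcal G^\vee(-d-1)$ is the swap. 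The new map is still symmetric, has the same cokernel $\mathcal F$, and still surjects on $H^0_*$. Yet the new bundle has $H^1_*\supseteq H^1_*(\Omega^1_{\PP^3})\neq 0$, so it does not split, and your first bullet fails. Dualizing the symmetric sequence returns the same sequence. Self-duality only gives $H^1(\mathcal F(m))^\vee\cong H^1(\mathcal F(d-3-m))$ and says nothing about $H^2_*(\mathcal E)$.

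\textbf{The missing input and how the step closes.} What you need is property iii) of the Casnati--Catanese construction (\cite{CasCat}, p.~242), which the paper invokes: $H^2_*(\mathcal E)=0$, equivalently $H^1_*(\mathcal E^\vee)=0$ by Serre duality. It implies the $H^0_*$-surjectivity you quote and excludes hyperbolic summands like the one above. With it, Horrocks reduces to $H^1_*(\mathcal E)=0$. The twisted sequence then gives $0\to H^1(\mathcal E(m))\to H^1(\mathcal F(m))\to H^2(\mathcal E^\vee(m-d-1))\to 0$, whose last term is dual to $H^1(\mathcal E(d-3-m))$. Hence $H^1_*(\mathcal F)=0$ if and only if $H^1_*(\mathcal E)=0$.

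\textbf{A repairable flaw in Step 3.} Your small-$m$ argument does not work as written. $L-m\tilde H$ is never nef when $\mathcal N\neq\emptyset$, since $(L-m\tilde H)\cdot E_P=-1$ for $P\in\mathcal N$. No twist by $\tilde H$ or consideration of $L^2$ changes this, because $\tilde H\cdot E_P=0$. The conclusion is still true, by Kawamata--Viehweg for $\mathbb{Q}$-divisors. Write $L-m\tilde H=\lceil B\rceil$ with $B=L-m\tilde H-\tfrac12E_{\mathcal N}\equiv(\tfrac12-m)\tilde H$. For $m\le 0$, $B$ is nef and big, and its fractional part $\tfrac12E_{\mathcal N}$ has snc support. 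So $h^1(K_{\tilde X}+L-m\tilde H)=0$, and this is Serre dual to $h^1(m\tilde H-L)$. The paper instead applies KV on the double cover branched along $\tilde H+E_{\mathcal N}$, which yields vanishing only for $m\le -1$ and $m\ge d-2$. That already suffices, because $m=0$ is paired with $m=d-3$, which stays in the window.
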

\begin{proof}
    By iii) in \cite[p.~242]{CasCat} and Serre duality, we have $h^2(\PP^3,\, \mathcal E(m)) = 0 = h^1(\PP^3,\, \mathcal E^\vee(\ell))$ for all $m$ and $\ell$. Thus, $h^1(\PP^3,\, \mathcal E(m)) = 0$ if and only if $h^1(X,\, \mathcal F(m)) = h^1(\tilde X,\, \mathcal O_{\tilde X}(m \tilde H - L))=0$.

    As above, let $\tilde p \colon \tilde S \to \tilde X$ be a double cover branched over $\tilde H+E_\mathcal N$. By Riemann-Hurwitz formula, we have $K_{\tilde S} \sim \tilde p^*( K_{\tilde X} + L) \sim (d-4)\, \tilde p^* \tilde H + \tilde p^* L$. Since $ \tilde p^* \tilde H$ is nef and big, Kawamata-Viewheg vanishing reads $h^1(\tilde S,\, \mathcal O_{\tilde S}( (d-4+m')\tilde p^* \tilde H + \tilde p^* L)) = 0$ for any positive integer $m'$. By projection formula,
    \begin{align*}
        \tilde p_* \mathcal O_{\tilde S}( (d-4 +m')\tilde p^* \tilde H + \tilde p^* L) %
        &= \tilde p_* \mathcal O_{\tilde S} \otimes \mathcal O_{\tilde X}( (d-4+m')\tilde H + L) \\
        &= \mathcal O_{\tilde X}( (d-4+m')\tilde H + L) \oplus \mathcal O_{\tilde X}( (d-4+m')\tilde H ).
    \end{align*}
    This in particular shows that
    \begin{equation}\label{eq: Kawamata-Viewheg vanishing}
        h^1(\tilde X,\, \mathcal O_{\tilde X}( m\tilde H + L)) = 0\ \ \text{for any integer } m > d-4.
    \end{equation}

    Let $v$ be an odd integer. From the short exact sequence
    \[
        0 \to \mathcal O_{\tilde X}\Bigl( \tfrac 12 ( v\tilde H - E_\mathcal N)\Bigr) \to \mathcal O_{\tilde X} \Bigl( \tfrac 12 ( v \tilde H + E_\mathcal N )\Bigr) \to \bigoplus_{P \in \mathcal N } \mathcal O_{E_P}(-1) \to 0,
    \]
    it follows that
    \begin{equation}\label{eq: half-system isom}
        H^p\Bigl( \tilde X, \, \mathcal O_{\tilde X}\Bigl( \tfrac 12 (v \tilde H - E_\mathcal N)\Bigr) \Bigr) \simeq %
        H^p\Bigl( \tilde X, \, \mathcal O_{\tilde X}\Bigl( \tfrac 12 (v \tilde H + E_\mathcal N)\Bigr) \Bigr)
    \end{equation}
    for each $p$. By Serre duality, we have
    \begin{align}
        h^p\Bigl( \tilde X, \, \mathcal O_{\tilde X}\Bigl( \tfrac 12 (v \tilde H - E_\mathcal N)\Bigr) \Bigr) %
        &= h^{2-p}\Bigl( \tilde X, \, \mathcal O_{\tilde X}\Bigl( K_{\tilde X} - \tfrac 12 (v \tilde H - E_\mathcal N)\Bigr) \Bigr) \nonumber\\%
        &= h^{2-p}\Bigl( \tilde X, \, \mathcal O_{\tilde X}\Bigl( \tfrac 12 \bigl( (2d-8-v) \tilde H - E_{\mathcal N} \bigr )\Bigr) \Bigr). \label{eq: Serre duality for quadratic sheaf}
    \end{align}
    Combining \eqref{eq: Serre duality for quadratic sheaf} with \eqref{eq: Kawamata-Viewheg vanishing} and \eqref{eq: half-system isom}, we deduce that
    \[
        h^1\Bigl(\tilde S,\, \mathcal O_{\tilde S}\Bigl( \tfrac1 2 ( v \tilde H - E_\mathcal N)\Bigr) \Bigr) = 0\ \ \text{if either $v \geq 2d-5$ or $v \leq -3$}.
    \]

    Consequently, $\mathcal E$ is splits into a direct sum of line bundles if and only if
    \[
        h^1( \tilde X,\, \mathcal O_{\tilde X}(k\tilde H - L)) = h^1 \Bigl(\tilde X, \, \mathcal O_{\tilde X}\Bigl( \tfrac 12 \bigl( (2k-1) \tilde H - E_\mathcal N \bigr) \Bigr) \Bigr) =0
    \]
    whenever $-1 \leq 2k-1 \leq 2d-7$ holds. By Serre duality, this is equivalent to have $h^1(\tilde X,\, \mathcal O_{\tilde X}(k \tilde H - L)) = 0$ for $k= \tfrac d2 - 1, \dots, d-3$.
\end{proof}

\smallskip
\subsection{Minimal half-even sets and nonreduced plane sections}
In this subsection, we collect some results from \cite{Endrass} that will be used throughout the paper. For details, see \cite{Endrass}.

Let $\mathcal N \subseteq \Sing X$ be a half-even set of nodes. Let $v$ be an odd integer such that $h^0\Bigl( \tilde X, \mathcal O_{\tilde X} \Bigl( \tfrac 12( v \tilde H - E_\mathcal N ) \Bigr) \Bigr)>0$. For a nonzero section $s \in H^0\Bigl( \tilde X, \mathcal O_{\tilde X} \Bigl( \tfrac 12( v \tilde H - E_\mathcal N ) \Bigr) \Bigr)$, let us denote by $\tilde D = \div(s) $ the divisor of $s$. Since $2\tilde D$ belongs to the linear system $\lvert \mathcal O_{\tilde X}(v\tilde H - E_\mathcal N )\rvert$, the push-forward $2D = \pi_*(2\tilde D)$ can be viewed as a member of $\lvert \mathcal O_X(v) \rvert$, namely, there exists a surface $V \subset \PP^3$ of degree $v$ such that $V.X = 2D$. As $(\tilde D.E_P)=1$ for each $P \in \mathcal N$, we find that $\mathcal N$ is precisely the set of singular points at which the curve $D$ has odd multiplicity.

\begin{definition}\label{def: half-even set cut out by a surface}
    A half-even set $\mathcal N$ is said to be \emph{cut out by a surface $V$} if $V.X = 2D$ and $D$ satisfies the following property: $P \in \Sing X$ belongs to $\mathcal N$ if and only if the multiplicity of $D$ at $P$ is odd.
\end{definition}

Assume that $v < d = \deg X$. From the short exact sequence
\[
    0 \to \mathcal O_{\PP^3}( v-d)  \to \mathcal O_{\PP^3}(v) \to \mathcal O_X(v) \to 0,
\]
it follows that $V$ is uniquely determined. A key idea of \cite{Endrass} is to study the \emph{quadratic system} $\mathsf Q_\mathcal N(v) \subset \lvert \mathcal O_{\PP^3}(v) \rvert$ on $\PP^3$ rather than the linear system
\[
\mathsf L_\mathcal N(v) := \Bigl\lvert \mathcal O_{\tilde X}\Bigl(\tfrac 12 ( v \tilde H - E_\mathcal N)\Bigr)\Bigr\vert.
\]
Let $s_0,\dots,s_n$ be a basis for $H^0\Bigl( \tilde X, \mathcal O_{\tilde X} \Bigl( \tfrac 12( v \tilde H - E_\mathcal N ) \Bigr) \Bigr)$. For $0 \leq i \leq j \leq n$, the section $s_is_j \in H^0( \tilde X,\, \mathcal O_{\tilde X}(v\tilde H- E_\mathcal N) )$ corresponds to $g_{ij} \in H^0(\PP^3, \, \mathcal O_{\PP^3}(v))$. Then, we have
\[
    \mathsf Q_{\mathcal N}(v) \simeq \Bigl\{ [\lambda_0,\dots,\lambda_n] \in \PP^n : g(\lambda):= \sum_{i=0}^n \lambda_i^2 g_{ii} + 2 \sum_{i<j} \lambda_i \lambda_j g_{ij} = 0\Bigr\},
\]
where the isomorphism induced by the embedding $\PP^n \to \PP^{v+3 \choose 3}$, which maps $[\lambda_0,\dots,\lambda_n]$ to the monomial coefficients of the homogeneous polynomial $g(\lambda)$ of degree $v$. This justifies the name \emph{quadratic system}. Note that the members of $\mathsf Q_{\mathcal N}(v)$ are identified with homogeneous polynomials of degree $v$, which are more explicit than those of $\mathsf L_{\mathcal N}(v)$.

The greatest common divisor of $\{ g(\lambda) : \lambda \in \mathsf Q_\mathcal N(v) \}$ defines the base locus $\mathsf B_\mathcal N(v)$ of $\mathsf Q_\mathcal N(v)$. We have the decomposition
\[
    \mathsf Q_\mathcal N(v) = \mathsf B_\mathcal N(v) + \mathsf F_\mathcal N(v),
\]
where $\mathsf F_\mathcal N(v)$ is the \emph{free part} of $\mathsf Q_\mathcal N(v)$. Note that $\mathsf F_\mathcal N(v)$ is base point free in codimension one; its base locus (if nonempty) consists of points and curves.

\begin{definition}
Let $\mathcal N$ be a half-even set of nodes, and let $v$ be an odd integer such that $\mathsf Q_\mathcal N(v) \neq \emptyset$. Then, $\mathcal N$ is said to be
\begin{enumerate}
    \item \emph{stable in degree $v$} if $\mathsf F_\mathcal N(v) = \emptyset$;
    \item \emph{semistable in degree $v$} if $\mathsf F_\mathcal N(v)$ is either empty or base point free;
    \item \emph{unstable in degree $v$} if $\mathsf F_\mathcal N(v)$ contains a base point.
\end{enumerate}
\end{definition}

A member of the form $2W$ is called a \emph{square}\,(as its defining equation is a square). From Definition~\ref{def: half-even set cut out by a surface}, one finds that $2W$ cuts out $\emptyset$. In particular, if $\mathsf F_\mathcal N(v)$ contains a square, then it contains all the squares of the same degree. Thus, $\mathsf F_\mathcal N(v)$ is base point free and $\mathsf B_\mathcal N(v)$ cuts out $\mathcal N$. The converse is also true according to the following proposition.
\begin{proposition}[{\cite[Proposition~3.6]{Endrass}}]
    Let $\mathcal N$ is a half-even set. Assume $\mathsf Q_\mathcal N(v) \neq \emptyset$. Then,
    \begin{enumerate}
        \item $\mathcal N$ is stable in degree $v$ if and only if $\dim \mathsf L_\mathcal N(v) = 0$;
        \item $\mathcal N$ is semistable in degree $v$ if and only if either $\mathsf F_\mathcal N(v)=\emptyset$ or $\mathsf F_\mathcal N(v)$ contains a square.
    \end{enumerate}
\end{proposition}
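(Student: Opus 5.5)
The plan is to work on $\tilde X$ with the half-system $\mathsf L_\mathcal N(v)$, and transport statements to $\mathsf Q_\mathcal N(v)$ through the squaring map $s \mapsto s^2$. This map sends $\mathsf L_\mathcal N(v)$ onto the set of members $2\tilde D$, and these correspond bijectively to the $V$'s with $V.X = 2D$; here I use that $v<d$, so $V$ is unique. I write $\tilde D_s = \div(s)$.

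\emph{Part (1).} Suppose first $\dim \mathsf L_\mathcal N(v)=0$, so $n=0$. Then $\mathsf Q_\mathcal N(v)=\{g_{00}\}$ is a single member, the gcd is $g_{00}$ itself, and $\mathsf F_\mathcal N(v)=\emptyset$; hence $\mathcal N$ is stable. Conversely, suppose $n\ge 1$ and pick two nonproportional sections $s_0,s_1$. If the free part were empty, every $g(\lambda)$ would equal the fixed $\mathsf B_\mathcal N(v)$ up to scalar. In particular $V(g_{00})=V(g_{11})$, so $2\tilde D_{s_0}=2\tilde D_{s_1}$ as divisors on $\tilde X$. This gives $\tilde D_{s_0}=\tilde D_{s_1}$, so $s_0,s_1$ are proportional, a contradiction. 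Thus $\mathcal N$ is not stable.

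\emph{Part (2).} The direction "$\Leftarrow$" is the remark preceding the proposition: if $\mathsf F_\mathcal N(v)$ contains a square, it contains all squares of that degree and is base point free. For "$\Rightarrow$", assume $\mathsf F_\mathcal N(v)\neq\emptyset$ is base point free, of degree $w=v-\deg \mathsf B_\mathcal N(v)$. First I split the half-system itself. Since $\mathsf B_\mathcal N(v).X$ is contained in every $2D$, write $\tilde D_s=\tilde B+\tilde M_s$ with $\tilde B$ the fixed part, so that the moving part $\lvert \tilde M\rvert$ has $2\tilde M_s = \pi^*(W_s.X)$ for $W_s\in \mathsf F_\mathcal N(v)$. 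Because $\mathsf F_\mathcal N(v)$ has no base points, a general $\tilde M_s$ is disjoint from every $E_P$, so $\tilde M.E_P=0$ for all $P$. Hence $\tilde M$ is a pullback from $X$ away from the nodes, and $\mathcal O_{\tilde X}(2\tilde M)\simeq \mathcal O_{\tilde X}(w\tilde H)$.

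The key step is to show that $\tilde M \sim \tfrac w2 \tilde H$ with $w$ even, and that the sections of $\tilde M$ come from degree $w/2$ forms on $\PP^3$. Once this holds, choosing such a form $h$ gives $2W=(h^2=0)\in \mathsf F_\mathcal N(v)$, which is the required square. To get there, I would use the moving members themselves, rather than only the class of $\tilde M$. Two members $\tilde M_{s}, \tilde M_{s'}$ give $\tilde M_s-\tilde M_{s'}=\div(s/s')$. Then $\pi_*$ of this is cut out on $X$ by the ratio $f_{ss}/f_{s s'}$, using the relation $f_{ss}f_{s's'}=f_{ss'}^2$ on $X$ inherited from $(s s')^2=s^2 s'^2$. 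I would then argue that the rational function $f_{ss}/f_{ss'}$ on $X$ is a square of a ratio of forms. This would follow from the vanishing $H^1(\tilde X,\mathcal O_{\tilde X}(k\tilde H))=0$ (projective normality of $X$, via the sequence $0\to \mathcal O_{\PP^3}(k-d)\to\mathcal O_{\PP^3}(k)\to \mathcal O_X(k)\to 0$ and $\pi_*\mathcal O_{\tilde X}=\mathcal O_X$). That vanishing lets sections of $\tilde M$ lift to forms.

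I expect this last step to be the main obstacle. The difficulty is excluding that $\tilde M-\tfrac w2\tilde H$ is a nontrivial $2$-torsion class, or that $w$ is odd. Both would correspond to an even set $\mathcal N'$ cut by $\mathsf F_\mathcal N(v)$. But base point freeness gives $\mathcal N'=\emptyset$ by the parity characterization in Definition~\ref{def: half-even set cut out by a surface}. So I would first try to show that $\tilde M$ being trivial on all $E_P$, together with the fact that the even set cut out by $W$ is empty, forces $E_{\mathcal N'}$ to be zero, and hence $\tilde M\sim \tfrac w2 \tilde H$.
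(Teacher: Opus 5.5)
The paper gives no proof of this statement; it is quoted from Endrass. So I am judging your argument on its own merits, and it has a genuine gap at the step you yourself flag.

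\textbf{What is correct.} Part (1) is correct. So is the direction ``$\Leftarrow$'' of part (2), and so is your reduction in ``$\Rightarrow$''. Base point freeness of $\mathsf F_\mathcal N(v)$ does give $2\tilde B+E_\mathcal N=\pi^*(\mathsf B_\mathcal N(v).X)$, $2\tilde M_s=\pi^*(W_s.X)$ and $\tilde M\cdot E_P=0$. Once $\tilde M\sim\tfrac w2\tilde H$ is known, take the member of the complete system $\lvert\tilde M\rvert$ cut out by any form $h$ of degree $w/2$; using $w<d$, it yields $h^2\in\mathsf F_\mathcal N(v)$.

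\textbf{Why your proposed fixes fail.} The step you flag as the main obstacle is not closed by either suggestion.
\begin{itemize}
\item Projective normality only says that sections of $\mathcal O_{\tilde X}(k\tilde H)$ come from forms of degree $k$. It says nothing about $\mathcal O_{\tilde X}(\tilde M)$ until you already know $\tilde M\sim k\tilde H$, so the lifting argument is circular.
\item The remark that the even set cut out by $W$ is empty just restates $\tilde M\cdot E_P=0$ and $2\tilde M\sim w\tilde H$. No information about $E_{\mathcal N'}$ can distinguish $\tilde M$ from $\tfrac w2\tilde H+\eta$ with $\eta$ a nontrivial $2$-torsion class, nor can it rule out odd $w$.
\end{itemize}

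\textbf{What is missing.} You need two global facts about $\tilde X$.

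First, $\Pic\tilde X$ has no $2$-torsion. A $2$-torsion class has degree $0$ on every $E_P$, so it descends to $X$, since nodes are rational singularities. There it defines an \'etale double cover of $X$. That cover is trivial because $\pi_1(X)=0$ by the Lefschetz theorem, as $\PP^3\setminus X$ is smooth and affine. Alternatively, $\tilde X$ is simply connected, being diffeomorphic to a smooth surface of degree $d$. The paper's notation $\tfrac12(v\tilde H-E_\mathcal N)$ already tacitly uses this uniqueness of square roots.

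Second, $\tilde H\notin 2\Pic\tilde X$; this is exactly what rules out odd $w$. If $w$ were odd, then $b=\deg\mathsf B_\mathcal N(v)=v-w$ would be even. Then $E_\mathcal N\sim b\tilde H-2\tilde B$ would make $\mathcal N$ even as well as half-even, forcing $\tilde H=2A$.
\begin{itemize}
\item For $d=6$ this is impossible, since $\tilde H^2=6$ is not divisible by $4$.
\item In general, $A$ would descend to $X$ because $A\cdot E_P=0$. But $c_1(\mathcal O_X(1))$ is primitive in $H^2(X,\ZZ)$, because $H_2(X,\ZZ)\to H_2(\PP^3,\ZZ)$ is onto by Lefschetz.
\end{itemize}

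With these two facts, $w$ is even and $\tilde M\sim\tfrac w2\tilde H$, and the rest of your argument goes through.
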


Suppose $\mathcal N$ is semistable in degree $v$. If $\deg \mathsf B_\mathcal N(v) = b$, then $\mathsf B_\mathcal N(b) = \mathsf B_\mathcal N (v)$ and $\mathsf F_\mathcal N(b) = \emptyset$, thus $\mathcal N$ is stable in degree $b$. The system $\mathsf Q_\mathcal N(v)$ behaves much better under the semistability assumption. The following proposition provides a useful criterion for semistability.

\begin{proposition}[{\cite[Corollary~3.9]{Endrass}}]\label{prop: semistable criteria}
    Let $\mathcal N$ be a half-even set of nodes.
    \begin{enumerate}
        \item If $\mathcal N$ is unstable in degree $v = \frac d2$, then $\lvert \mathcal N \rvert = (\frac d2)^3$.
        \item If $\mathcal N$ is unstable in degree $v = \frac 12 (d+1)$, then $\lvert \mathcal N\rvert \geq \frac 18 d(d-1)^2$.
        \item If $\mathcal N$ is unstable in degree $v = \frac d2+1$, then $\lvert \mathcal N\rvert \geq \frac 18 d(d-2)^2$.
    \end{enumerate}
\end{proposition}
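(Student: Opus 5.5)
The plan is to turn the instability assumption into a symmetric $2\times 2$ determinantal identity for $F$, and then count the nodes of $\mathcal N$ as points of a degeneracy locus using Bézout. Suppose $\mathcal N$ is unstable in degree $v$. Then $\mathsf F_\mathcal N(v)\neq\emptyset$, so $\dim\mathsf L_\mathcal N(v)\geq 1$ by \cite[Proposition~3.6]{Endrass}, and $\mathsf F_\mathcal N(v)$ contains no square. Choose two general sections $s_0,s_1\in H^0\bigl(\tilde X,\mathcal O_{\tilde X}(\tfrac12(v\tilde H-E_\mathcal N))\bigr)$ and form the corresponding forms $g_{00},g_{01},g_{11}$ of degree $v$. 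Since $s_0^2s_1^2=(s_0s_1)^2$ on $\tilde X$, the form $g_{00}g_{11}-g_{01}^2$ vanishes on $X$. Because $2v<2d$, we can write
\[
g_{00}g_{11}-g_{01}^2 = h\cdot F,\qquad \deg h = 2v-d\in\{0,1,2\}
\]
in the three cases (1), (2), (3). First I would check that $h\neq 0$. If $h=0$, then $g_{00}g_{11}=g_{01}^2$ as polynomials. By unique factorization, after removing the common base part $\mathsf B_\mathcal N(v)$, both $g_{00}$ and $g_{11}$ would be squares up to scalars, so $\mathsf F_\mathcal N(v)$ would contain a square. This contradicts instability.

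Next I would locate $\mathcal N$ inside the degeneracy locus $Z=\{g_{00}=g_{01}=g_{11}=0\}$. For $P\in\mathcal N$, every $\tilde D=\div(s)$ meets $E_P$ with intersection number $1$. Hence every member of the pencil passes through $P$, so $g_{00}(P)=g_{01}(P)=g_{11}(P)=0$. Conversely, at a point of $Z$ where $h\neq 0$, the symmetric matrix $\begin{pmatrix} g_{00}&g_{01}\\ g_{01}&g_{11}\end{pmatrix}$ drops rank by $2$. Such a point is a singular point of $\{hF=0\}$, hence a node of $X$, and it is one where $\tilde D$ has odd multiplicity, so it lies in $\mathcal N$. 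Genericity of the pencil, together with the absence of a square in the free part, should ensure that $Z$ is finite away from the common base surface. The base point of $\mathsf F_\mathcal N(v)$ is what forces the free parts to meet and contributes to the count.

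For case (1), where $v=d/2$ and $h$ is a nonzero constant, $Z$ is a complete intersection of three surfaces of degree $v$, with no base surface. Each point of $Z$ is a simple point exactly when $X$ has a node there. Bézout then gives $\lvert\mathcal N\rvert=v^3=(d/2)^3$. For cases (2) and (3), I would also have to account for the points of $Z$ on the plane or quadric $\{h=0\}$, which need not be nodes of $X$. Here I would restrict to $\{h=0\}$: the identity $g_{00}g_{11}=g_{01}^2$ on $\{h=0\}$ says that the pencil becomes a square pencil there. This lets me bound the number of points of $Z\cap\{h=0\}$ by a Bézout count on the plane or quadric, and then subtract it from $v^3$. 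That should yield the lower bounds $\frac18 d(d-1)^2$ and $\frac18 d(d-2)^2$.

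The main obstacle I expect is precisely this excess-intersection bookkeeping. It has two parts. The first is handling the fixed part $\mathsf B_\mathcal N(v)$, so that $Z$ is genuinely zero-dimensional after removing it; I would try passing to the moving parts of the $\tilde D$ on $\tilde X$ and computing intersection numbers there instead. The second is getting sharp multiplicity control on $\{h=0\}$. If the direct Bézout estimate is too weak, my fallback is to compute on $\tilde X$. There the moving parts $M_0,M_1$ share a base point, so $M_0\cdot M_1>0$. Expanding $\bigl(\tfrac12(v\tilde H-E_\mathcal N)-\text{fixed part}\bigr)^2$ then produces an inequality of the form $v^2d-2\lvert\mathcal N\rvert\leq$ (explicit term), which rearranges to the stated bounds.
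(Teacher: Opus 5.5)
The paper does not prove this statement; it quotes it from Endra\ss{}, and only item (1) is ever used (with $d=6$, $v=3$). Your argument for (1) is sound in outline, but you still need to justify $Z\subseteq\mathcal N$. Here is the missing step. Since $h$ is a nonzero constant, $V_0.X=2\{g_{00}=g_{01}=0\}$, so $D_0=\{g_{00}=g_{01}=0\}$. At $P\in Z$ the Hessian of $g_{00}g_{11}-g_{01}^2$ has rank $3$ exactly when $dg_{00},dg_{01},dg_{11}$ are independent. Hence the intersection is transversal, which gives $\lvert Z\rvert=v^3$, and $D_0$ is smooth at $P$, so it has odd multiplicity there.

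For (2) and (3) there is a genuine gap. (For half-even sets $v$ is odd and $d$ is even, so (2) is actually vacuous and (3) concerns $4\mid d$.) The decisive count is announced but never carried out, and neither of your two routes works as described. The lower bounds in all three items read $\lvert\mathcal N\rvert\ge\frac12 d(d-v)^2$. With $L_v=\frac12(v\tilde H-E_\mathcal N)$, this is equivalent to $L_v^2=\frac14(v^2d-2\lvert\mathcal N\rvert)\le\frac14(2v-d)d^2$. So you need an \emph{upper} bound on how much of the $v^3$ in $V_0\cdot V_1\cdot W_{01}$, where $W_{01}=\{g_{01}=0\}$, is absorbed on $\Pi=\{h=0\}$.

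Your claim that $Z$ is finite away from the base surface is unsupported. On $\Pi$ the identity only yields $g_{00}g_{11}=g_{01}^2$, and nothing you say prevents the $g_{ij}|_\Pi$ from sharing a factor. In that case $Z$ contains a curve in $\Pi$, and $v^3$ counts nothing without an excess-intersection formula. Even when $Z\cap\Pi$ is finite, a Bézout count on $\Pi$ bounds the \emph{number} of its points. What you must subtract from $v^3$ are their \emph{multiplicities}. On $\Gamma=V_0\cap V_1$ one has $g_{01}^2=-hF$, so at $Q\in Z\cap\Pi$ the multiplicity is $\frac12\bigl((\Gamma\cdot\Pi)_Q+(\Gamma\cdot X)_Q\bigr)$, and you have no control over the contact term $(\Gamma\cdot X)_Q$.

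Your fallback argues in the wrong direction. Positivity of $M_0\cdot M_1$ can only bound $L_v^2$, and hence $v^2d-2\lvert\mathcal N\rvert$, from below (up to fixed-part terms). That yields an \emph{upper} bound on $\lvert\mathcal N\rvert$, not the lower bounds claimed. Its premise is also false: a base point of $\mathsf F_\mathcal N(v)$ in $\PP^3$ need not give a base point of the moving part on $\tilde X$. In case (1) the base points of $\mathsf F_\mathcal N(v)$ are exactly the nodes of $\mathcal N$. Near each such node the $\tilde D_\lambda$ are strict transforms of distinct rulings of the cone $g_{00}g_{11}=g_{01}^2$, so they meet $E_P$ at distinct points. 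Thus the pencil on $\tilde X$ is base point free and $L_v^2=0$. As it stands, items (2) and (3) are unproved. Completing them requires an actual argument bounding $L_v^2$ from above, equivalently bounding the excess contribution along $\Pi\cap X$.
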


We close this subsection by restating the half-even part of the main theorem of \cite{Endrass}.
\begin{theorem}[{\cite[Theorem~1.10]{Endrass}}]\label{thm: Endrass min half-even set}
    Assume $d \in \{2,4,6,8\}$. If $\mathcal N$ is a nonempty half-even set, then $\lvert \mathcal N \rvert \geq \frac 12 d(d-1)$ and the equality holds if and only if $\mathcal N$ is stable in degree $1$, or equivalently, $\mathcal N$ is cut out by a plane. 
\end{theorem}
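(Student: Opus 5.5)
The plan is to look at the smallest odd degree $v$ in which $\mathsf Q_\mathcal N(v)\neq\emptyset$. First we find an upper bound for this $v$, and then we bound $\lvert\mathcal N\rvert$ from below by computing the numerical invariants of the half-curve $\tilde D$.

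\emph{Step 1 (existence of a small-degree member).} Put $n=\lvert\mathcal N\rvert$ and $D_v=\tfrac12(v\tilde H-E_\mathcal N)$. Using $K_{\tilde X}=(d-4)\tilde H$, Riemann--Roch gives
\[
\chi(\mathcal O_{\tilde X}(D_v))=\tfrac12 D_v\cdot(D_v-K_{\tilde X})+\chi(\mathcal O_{\tilde X}),\qquad D_v^2=\tfrac14(v^2d-2n),\quad D_v\cdot K_{\tilde X}=\tfrac12 v(d-4)d,
\]
with $\chi(\mathcal O_{\tilde X})=1+\binom{d-1}{3}$. By \eqref{eq: Serre duality for quadratic sheaf}, $h^2(D_v)=h^0(D_{2d-8-v})$. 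The latter vanishes when $2d-8-v<0$, since then $2D_{2d-8-v}$ has negative degree against $\tilde H$. Combining this with the $h^1$-vanishing established in the proof of Proposition~\ref{prop: aCM criterion}, we obtain $h^0(D_v)>0$ for some explicit odd $v\le d-1$. This holds, for instance, for $v=d/2$ or $v=d/2\pm1$ (whichever is odd), unless $n$ is already large. For $d\le 8$ this leaves only $v\in\{1,3,5\}$ to consider.

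\emph{Step 2 (reduction to the stable case).} Apply Proposition~\ref{prop: semistable criteria} at the relevant $v$. If $\mathcal N$ is unstable there, then $n\ge\tfrac18d(d-2)^2$, $\tfrac18d(d-1)^2$, or $n=(d/2)^3$, according to the case. For $d\le 8$ each of these bounds already exceeds $\tfrac12d(d-1)$ except in the trivial small cases, which I would check by hand. So we may assume $\mathcal N$ is semistable, hence stable in the degree $b=\deg\mathsf B_\mathcal N(v)$. Then $\mathcal N$ is cut out by a surface $V$ of degree $b$ with $V.X=2D$, and $\tilde D$ is the unique member of $\mathsf L_\mathcal N(b)$.

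\emph{Step 3 (the count).} For $b=1$, $D$ is a plane curve of degree $d/2$, and $p_a(\tilde D)=1+\tfrac12(\tilde D^2+\tilde D\cdot K_{\tilde X})$. Comparing this with the genus of a plane curve of degree $d/2$ gives exactly $n=\tfrac12d(d-1)$; for $d=6$ one finds $n=15$ with $D$ a cubic. To obtain the inequality in general, I would compare $p_a(\tilde D)=1+\tfrac12\bigl(\tfrac14(b^2d-2n)+\tfrac12b(d-4)d\bigr)$ with an upper bound on the arithmetic genus of the curve $D\subset V$ of degree $bd/2$. Here I would use a Castelnuovo-type bound, or adjunction on $V$ together with $2D\sim\mathcal O_V(d)$. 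This yields a lower bound on $n$ that is increasing in $b$, with equality at $b=1$ and strict inequality for $b\ge3$. Consequently $n\ge\tfrac12d(d-1)$, and equality forces $b=1$, i.e.\ $\mathcal N$ is stable in degree $1$ and cut out by a plane. Conversely, a plane cutting out $\mathcal N$ gives equality by the genus computation above.

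The main obstacle is Step~3 for $b=3$ (and $b=5$ when $d=8$). There the genus bound for $D$ on a possibly singular surface $V$ must be sharp enough to exclude $n=\tfrac12d(d-1)$. My first attempt would use Hodge index on $V$ combined with $D\equiv\tfrac d2 H_V$, so that $D$ lies in a specific complete linear system. If that is insufficient, the fallback is a case-by-case treatment of the finitely many $(d,b)$ pairs that arise.
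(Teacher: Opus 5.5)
The paper does not prove this statement; it only quotes it from Endra\ss{}. Your proposal therefore has to stand on its own, and it has a genuine gap exactly where you suspect: Step~3 for $b\ge 3$ is the heart of the theorem, and the genus route does not close it.

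A Castelnuovo-type bound needs $D$ to be reduced, irreducible and nondegenerate, and even then it is far too weak. For $(d,b)=(8,5)$, excluding $n\le 28$ requires $p_a(\tilde D)=66-\tfrac n4\le 58$ for a curve of degree $20$, whereas Castelnuovo's bound in $\PP^3$ is $81$. Adjunction on $V$ only works in the ideal case ($V$ smooth and $\tilde D\cong D$), where it gives $n=\tfrac12 bd(d-b)$. In general $V$ can be singular or reducible, and $\tilde D$ can contain curves $E_P$ over nodes where $D$ has even multiplicity. You offer no way to handle these cases.

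Step~1 is also misjustified. The $h^1$-vanishing in the proof of Proposition~\ref{prop: aCM criterion} only covers $v\ge 2d-5$ or $v\le -3$. What you actually need is $h^0(D_v)+h^0(D_{2d-8-v})\ge\chi(D_v)$, which follows from $h^2(D_v)=h^0(D_{2d-8-v})$, together with the implication $h^0(D_{v-2})>0\Rightarrow h^0(D_v)>0$.

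The missing idea is that the geometry of $D$ is not needed at all. Stability in degree $b$ means $h^0(D_b)=1$, and Riemann--Roch combined with this Serre duality forces $n$ to be large.

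\emph{Case $d=6$.} Suppose $h^0(D_1)=0$. Then $h^2(D_3)=0$, so $h^0(D_3)\ge\chi(D_3)=\tfrac{35-n}{4}$. If $n<35$, then $\mathsf Q_{\mathcal N}(3)\neq\emptyset$. Instability gives $n=27$. Semistability gives stability in degree $3$, because a square in the free part would force $\deg\mathsf B_{\mathcal N}(3)=1$, i.e.\ $h^0(D_1)>0$. Hence $1\ge\tfrac{35-n}{4}$, i.e.\ $n\ge 31$.

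\emph{Case $d=8$.} Suppose $h^0(D_1)=0$ and $n\le 28$. Then $h^0(D_3)+h^0(D_5)\ge\chi(D_5)=21-\tfrac n4>0$, so $\mathsf Q_{\mathcal N}(5)\neq\emptyset$. Proposition~\ref{prop: semistable criteria}(3) then gives semistability in degree $5$; note you must work at $v=5$, since the proposition says nothing about $v=3$. So $\mathcal N$ is stable in some degree $b\in\{3,5\}$. If $b=5$, then $h^0(D_3)=0$ and $1\ge 21-\tfrac n4$. If $b=3$, then $h^0(D_3)=1$ and $h^0(D_5)=4$, since the free part consists of squares of planes, so $5\ge\chi(D_5)=21-\tfrac n4$. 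Both contradict $n\le 28$.

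\emph{Cases $d\in\{2,4\}$.} Here $h^2(D_1)=0$ and $\chi(D_1)>0$ for small $n$, so $h^0(D_1)>0$ directly.

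Combined with your correct plane-case computation $n=\tfrac12 d(d-1)$, this proves the statement. In that computation you should check that all nodes on the plane lie in $\mathcal N$ and that $D$ is smooth there, so that $\tilde D\cong D$. This argument also shows where $d\le 8$ enters. One needs an odd $v$ with $d-4<v\le\tfrac d2+1$, so that Proposition~\ref{prop: semistable criteria} applies in degree $v$ while $h^2(D_v)=h^0(D_{2d-8-v})$ only involves a smaller degree. Such a $v$ exists only for $d\le 8$.
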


\bigskip
\section{The extended code of maximally nodal sextic surfaces}\label{sec: extended code of 65-nodal sextic}
\newcommand*{\symdiff}{\mathbin\triangle}%
Let $\mathcal N_1,\, \mathcal N_2$ be even sets of nodes. Their symmetric difference
\[
    \mathcal N_1 \symdiff \mathcal N_2 := (\mathcal N_1 \cup \mathcal N_2) \setminus (\mathcal N_1 \cap \mathcal N_2)
\]
is again an even set of nodes since $E_{\mathcal N_1 \symdiff \mathcal N_2} =  E_{\mathcal N_1} + E_{\mathcal N_2} - 2 E_{\mathcal N_1 \cap \mathcal N_2} \in 2 \, \Pic \tilde X$. Similarly, if $\mathcal N_1$ is half-even and $\mathcal N_2$ is even, then $\mathcal N_1 \symdiff \mathcal N_2$ is half-even. The symmetric difference of two half-even sets is even. From these observations we can naturally associate an $\FF_2$-vector space which consists of (half-)even sets.

\begin{definition}
Let $\Sing X = \{P_1,\dots,P_\nu\}$, and let $\mathcal V$ be an $\FF_2$-vector space of dimension $\nu+1$. Let $\{ e_0, e_1,e_2,\dots,e_{\nu} \}$ be an $\FF_2$-basis for $V$. To each (half-)even set $\mathcal N$ we associate a \emph{characteristic vector}
\[
    w_\mathcal N = \left\{
    \begin{array}{rl}
        e_0 + \sum_{P_i \in \mathcal N} e_i & \text{ if $\mathcal N$ is half-even}\\
        \sum_{P_i \in \mathcal N} e_i & \text{ if $\mathcal N$ is even}.
    \end{array}
    \right.
\]
The subspace $\mathcal K$ (resp. $\bar{\mathcal K})$ of the characteristic vectors of even sets (resp. even and half-even sets) is called the \emph{strict code} (resp. \emph{extended code}) of $X$.

An element $w \in \bar{\mathcal K}$ is called the \emph{codeword}, and is written as the binary string
\[
    w = (\,w^0 \,w^1\, w^2\, \dots\, w^\nu\,),
\]
where $w = \sum_{i=0}^\nu w^i e_i$ with $w^i \in \{0,1\}$. The weight $\lvert w \rvert$ of $w$ is the integer $\sum_{i=0}^\nu w^i$, where the sum is taken in $\ZZ$.
\end{definition}

We note that if $\mathcal N$ is half-even, then $\lvert w_\mathcal N \rvert = 1 + \lvert \mathcal N \rvert$ as $w^0=1$. One of the main results of \cite{CatChoKie} and \cite{Kurz} identifies the extended code of the maximally nodal sextic surfaces.
\begin{theorem}[{\cite[Section~4.3]{CatChoKie} and \cite[Theorem~5.3]{Kurz}}]\label{thm: extended code of 65-nodal sextic}
Let $X$ be a nodal sextic surface with $65$ nodes, and let $\bar{\mathcal K} \subseteq \FF_2^{66}$ be an extended code. Then, $\bar{\mathcal K}$ is isomorphic to the vector space spanned by the following basis:\footnote{We put some spaces and vertical lines for better readability.}
\[
    \scalebox{.75}{$
    \left(
    \begin{array}{c @{\,|\,} c @{\,} c @{\,} c @{\,} c @{\,|\,} c @{\,} c @{\,} c@{\,} c @{\,|\,} c@{\,} c@{\,} c@{\,} c}
        1 & 0000 & 0000&0110&001&00000110&10000001&00011000&111111&10011111&0011&1100&1010 \\
        0 & 1000 & 0001&0001&010&00001110&00001011&11000001&100110&11000100&1011&0001&0100 \\
        0 & 0100 & 0001&0111&111&00001011&11000010&11101001&100110&00100000&1101&1011&0111 \\
        0 & 0010 & 0001&0010&001&00000111&10000101&00011010&101010&10011000&0010&1000&1011 \\
        0 & 0001 & 0001&0100&100&00001101&00010110&00001110&010101&00010101&0001&0111&0010 \\
        0 & 0000 & 1001&0011&110&00000101&01100011&10110001&001111&11010111&0110&0101&1100 \\
        0 & 0000 & 0101&0110&011&00001100&10001011&01011001&111111&11011110&0000&1100&1100 \\
        0 & 0000 & 0011&0101&101&00000110&10110010&00101011&001111&10011111&0011&1100&1010 \\
        0 & 0000 & 0000&1111&000&00001111&10100101&00111100&111111&10010110&0101&0101&1010 \\
        0 & 0000 & 0000&0000&000&10001000&00100010&01000100&111111&11101110&1010&1010&0101 \\
        0 & 0000 & 0000&0000&000&01000100&00010001&10001000&111111&11011101&1010&0101&1010 \\
        0 & 0000 & 0000&0000&000&00100010&10001000&00010001&111111&10111011&0101&1010&1010 \\
        0 & 0000 & 0000&0000&000&00010001&01000100&00100010&111111&01110111&0101&0101&0101 \\
    \end{array}
    \right)\raisebox{-6\baselineskip}{.}
    $}
\]
\end{theorem}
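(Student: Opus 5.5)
The plan has three parts: pin down the dimension of $\bar{\mathcal K}$, restrict the possible weights of its codewords by geometry, and then classify all binary codes of length $66$ with these parameters, showing the classification has a single solution up to coordinate permutation.

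\emph{Dimension.} I would first compute $\dim \mathcal K = 12$ following Beauville's argument. The classes $E_{P}$, $P\in\Sing X$, span a negative definite sublattice of rank $65$ in $H^2(\tilde X,\ZZ)$. Here $b_2(\tilde X)=106$ and $h^{1,1}(\tilde X)=86$. Let $M$ be the primitive closure of $\langle E_P\rangle$. Then $M/\langle E_P\rangle$ is an $\FF_2$-vector space, and it is identified with $\mathcal K$. Its dimension is bounded below by comparing the discriminant $2^{65}$ of $\langle E_P\rangle$ with the discriminant of $M$. That discriminant is controlled by the rank of the orthogonal complement of $M$ inside the unimodular lattice $H^2(\tilde X,\ZZ)$, using that $\rho(\tilde X)\le 86$. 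This gives $\dim\mathcal K\ge 65-(106-65)/2 \cdot(\text{correction}) $, and I expect it to sharpen to exactly $12$.

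A matching upper bound comes from the fact that a code with the weight restrictions below cannot have larger dimension. For $\bar{\mathcal K}$, I would exhibit one half-even set, for instance one cut out by a cubic or quadric via Endrass' quadratic systems, or simply use that $\tilde H$ is not divisible by $2$ modulo the $E_P$. Either way $\dim\bar{\mathcal K}=13$.

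\emph{Weights.} Next I would restrict the weights. For even sets on a sextic, the classical results (Beauville, Catanese, Endrass) give $\lvert\mathcal N\rvert\in\{24,32,40,56\}$. The upper values are excluded or allowed by counting against $65$; for example, $\lvert\mathcal N\rvert\le 64$ always holds, and the complement structure limits what remains.

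For half-even sets, the congruence $d-2\lvert\mathcal N\rvert\equiv 0 \pmod 4$ forces $\lvert\mathcal N\rvert$ to be odd. Theorem~\ref{thm: Endrass min half-even set} gives $\lvert\mathcal N\rvert\ge 15$, with equality exactly for sets cut out by a plane. Proposition~\ref{prop: semistable criteria}, together with the Riemann--Roch computation of $h^0\bigl(\tfrac12(v\tilde H-E_\mathcal N)\bigr)$ for $v=3,5$, then excludes further small and large values. The outcome should be that half-even sets have cardinality in a short list such as $\{31,35,39,\dots\}$, and that the complement of a half-even set relates to another via $w\mapsto w+\mathbf 1$ when $\mathbf 1$-type words exist.

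\emph{Classification.} With length $66$, dimension $13$, distinguished coordinate $e_0$, and an allowed weight set for words with $w^0=0$ and with $w^0=1$, I would classify all such codes. The first step is to classify the $[65,12]$ strict codes with weights in $\{24,32,40,56\}$. These are doubly even, which makes them rigid, and one can build them up by the standard Kurz-type extension: enumerate subcodes dimension by dimension, using MacWilliams identities and linear programming on weight enumerators to prune. The second step is to adjoin the half-even coset and check that only one orbit under $S_{65}$ survives. Finally, I would match the survivor with the displayed generator matrix by comparing weight enumerators and automorphism groups.

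The main obstacle is this exhaustive classification step. It is a computer-assisted search, and the difficulty is making it feasible. My first approach would be to use the linear-programming bounds to force the weight enumerator, and possibly a shortened code, to be unique, before running any isomorphism enumeration.
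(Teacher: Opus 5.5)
You have the right overall plan: determine the dimension, restrict the admissible weights, then classify codes up to permutations fixing $e_0$. The paper does not reprove this statement; it quotes it from \cite{CatChoKie} and \cite{Kurz}, where it rests on a classification of codes satisfying geometric constraints. The genuine gap is that the constraints you would feed into the search are wrong, so the search could never return the code in the statement.

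Your expected half-even cardinalities $\{31,35,39,\dots\}$ are refuted by the statement itself. Table~\ref{tbl: minimal half-even sets} lists $26$ codewords of weight $16$, i.e.\ half-even sets of $15$ nodes cut out by planes, so the bound in Theorem~\ref{thm: Endrass min half-even set} is attained. The first basis row has weight $28$, a half-even set of $27=(d/2)^3$ nodes. So Proposition~\ref{prop: semistable criteria} cannot be used to exclude small cardinalities.

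What you can get cheaply is stronger than oddness. Integrality of $\chi(\mathcal F)=11+\tfrac14(15-\lvert\mathcal N\rvert)$, which is \eqref{eq: Riemann-Roch for quadratic sheaf} at $m=0$, gives $\lvert\mathcal N\rvert\equiv 3\ (\mod 4)$. Hence $\bar{\mathcal K}$ is doubly even and its half-even words have weight at least $16$. The strict code is in fact $8$-divisible, with weights in $\{24,32,40,56\}$ by Catanese--Tonoli. Any finer restriction must be proved before it is used to prune a search.

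Your remark about $w\mapsto w+\mathbf 1$ is vacuous. Since $65\not\equiv 0\ (\mod 8)$ and $65\not\equiv 3\ (\mod 4)$, the set of all nodes is neither even nor half-even.

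The dimension step also needs repair.

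\emph{Lower bound for $\mathcal K$.} No correction is needed. With $k=\dim\mathcal K$, the saturation of $\langle E_P\rangle$ has discriminant group $(\ZZ/2)^{65-2k}$. This is isomorphic to the discriminant group of its orthogonal complement, which has rank $106-65=41$. Hence $65-2k\le 41$, i.e.\ $k\ge 12$.

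\emph{Existence of a half-even set.} Neither of your suggestions works. A quadric has even degree and cuts out even sets. You cannot exhibit a cubic cutting out a half-even set on an arbitrary $65$-nodal sextic without already knowing one exists. And ``$\tilde H$ not divisible by $2$ modulo the $E_P$'' is exactly the condition that there is \emph{no} half-even set, i.e.\ $\bar{\mathcal K}=\mathcal K$.

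The uniform argument is Beauville's isotropy trick with $\tilde H$ included. Since $\tilde H^2=6$, $E_P^2=-2$ and $\tilde H\cdot E_P=0$, the image of $\FF_2^{66}\to H^2(\tilde X,\FF_2)$, $(a_0,a_P)\mapsto a_0\tilde H+\sum a_PE_P$, is totally isotropic in a nondegenerate $106$-dimensional space. So it has dimension at most $53$. Its kernel is exactly $\bar{\mathcal K}$, because $\tilde X$ is simply connected and half of an algebraic class is algebraic by Lefschetz $(1,1)$. Therefore $\dim\bar{\mathcal K}\ge 13$, and together with $\dim\mathcal K\le 12$ this forces a half-even set to exist.

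\emph{Upper bound for $\mathcal K$.} The bound $\dim\mathcal K\le 12$ does not follow from any lattice count. In your plan it must be an explicit output of the coding-theoretic analysis, namely non-existence of a $13$-dimensional strict code with these weights, not an assumption.
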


\medskip
The minimum weight of nonzero codewords in $\bar{\mathcal K}$ is $16$, and there are $26$ such codewords. These codewords play an important role in this paper. Table~\ref{tbl: minimal half-even sets} is the list of the codewords of weight $16$ in the code $\bar{\mathcal K}$ of Theorem~\ref{thm: extended code of 65-nodal sextic}. Note that by Theorem~\ref{thm: Endrass min half-even set}, each half-even set in Table~\ref{tbl: minimal half-even sets} is cut out by a plane.
\begin{table}[h!]
\scalebox{0.75}{
\begin{tabular}{r@{(\,}c @{\,|\,} c @{\,} c @{\,} c @{\,} c @{\,|\,} c @{\,} c @{\,} c@{\,} c @{\,|\,} c@{\,} c@{\,} c@{\,} c@{\,)}}
    \hline
    $w_1={}$&1&0010&0001&0100&000&00010000&01000000&00100000&101010&01110000&0100&0001&0100\\
    $w_2={}$&1&1000&0100&0001&000&01000000&00010000&00001000&011001&01011000&0010&0100&1000\\
    $w_3={}$&1&0001&0010&1000&000&00100000&00001000&00010000&100101&00111000&0001&1000&0010\\
    $w_4={}$&1&0100&1000&0010&000&00001000&00100000&01000000&010110&01101000&1000&0010&0001\\
    $w_5={}$&1&0010&0001&0100&000&00000001&00000100&00000010&010101&00000111&0001&0100&0001\\
    $w_6={}$&1&0000&1010&0000&010&00010100&00010100&10100000&000000&10100000&0011&0000&1001\\
    $w_7={}$&1&1100&0000&0000&100&00100001&11000000&00100001&000000&11000000&0000&1100&0011\\
    $w_8={}$&1&0000&0000&1001&001&10010000&01000010&01000010&000000&10010000&1001&0110&0000\\
    $w_9={}$&1&1000&0100&0001&000&00000100&00000001&10000000&100110&10000101&1000&0001&0010\\
    $w_{10}={}$&1&0000&1010&0000&010&01000001&01000001&00001010&000000&00001010&1100&0000&0110\\
    $w_{11}={}$&1&0000&0000&0110&001&01000010&10010000&10010000&000000&01000010&1001&1001&0000\\
    $w_{12}={}$&1&0011&0000&0000&100&00001100&00010010&00001100&000000&00010010&0000&0011&0011\\
    $w_{13}={}$&1&0001&0010&1000&000&00000010&10000000&00000001&011010&10000011&0100&0010&1000\\
    $w_{14}={}$&1&1100&0000&0000&100&00010010&00001100&00010010&000000&00001100&0000&0011&1100\\
    $w_{15}={}$&1&0000&0000&0110&001&00100100&00001001&00001001&000000&00100100&0110&0110&0000\\
    $w_{16}={}$&1&0000&0101&0000&010&10100000&10100000&00010100&000000&00010100&1100&0000&1001\\
    $w_{17}={}$&1&0100&1000&0010&000&10000000&00000010&00000100&101001&10000110&0010&1000&0100\\
    $w_{18}={}$&1&0000&0000&1001&001&00001001&00100100&00100100&000000&00001001&0110&1001&0000\\
    $w_{19}={}$&1&0011&0000&0000&100&11000000&00100001&11000000&000000&00100001&0000&1100&1100\\
    $w_{20}={}$&1&0000&0101&0000&010&00001010&00001010&01000001&000000&01000001&0011&0000&0110\\
    $w_{21}={}$&1&1001&1001&0000&001&10011001&10011001&00000000&000011&00000000&0000&0000&0000\\
    $w_{22}={}$&1&0110&0110&0000&001&01100110&01100110&00000000&000011&00000000&0000&0000&0000\\
    $w_{23}={}$&1&0000&1100&1100&100&00000000&11001100&11001100&110000&00000000&0000&0000&0000\\
    $w_{24}={}$&1&0000&0011&0011&100&00000000&00110011&00110011&110000&00000000&0000&0000&0000\\
    $w_{25}={}$&1&1010&0000&1010&010&10101010&00000000&10101010&001100&00000000&0000&0000&0000\\
    $w_{26}={}$&1&0101&0000&0101&010&01010101&00000000&01010101&001100&00000000&0000&0000&0000\\ \hline
\end{tabular}}
\caption{Minimal half-even set of nodes.}\label{tbl: minimal half-even sets}
\end{table}

\bigskip
\section{Proof of Theorem~\ref{thm: main}}
By \eqref{eq: symmetric splitting type}, it suffices to show that $X$ contains a symmetric half-even set $\mathcal N$ of type $(1,1,1,1,1,1)$. Using Riemann-Roch theorem, we deduce
\begin{equation}\label{eq: Riemann-Roch for quadratic sheaf}
    \chi( \mathcal F(m)) = 11 + \frac 34 (2m-1)(2m-5) - \frac {\lvert \mathcal N \rvert}{4},
\end{equation}
where $\mathcal F$ is the sheaf as in Theorem~\ref{thm: Casnati-Catanese}. If $\mathcal N$ was symmetric, then by the short exact sequence in Theorem~\ref{thm: main} we have $\chi(\mathcal F) = 6$, or equivalently, $\lvert \mathcal N \rvert = 35$.

\begin{proposition}[{\cite[Lemma~4.7]{CatChoKie}}]\label{prop: symmetry criterion}
    Suppose $\mathcal N$ is a half-even set of cardinality $35$. If $h^0(X,\, \mathcal F(2))=0$, then $\mathcal N$ is symmetric of type $(1,1,1,1,1,1)$.
\end{proposition}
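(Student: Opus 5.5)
The plan is to verify the cohomological criterion of Proposition~\ref{prop: aCM criterion} with $d=6$. That proposition asks for $h^1(\tilde X,\mathcal O_{\tilde X}(m\tilde H-L))=0$ for $m=2,3$. Since $L\cdot E_P=-1$ for $P\in\mathcal N$ and the nodes are rational singularities, I will use freely that $h^i(X,\mathcal F(m))=h^i(\tilde X,\mathcal O_{\tilde X}(m\tilde H-L))$.

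\emph{Symmetry of the cohomology.} Write $m\tilde H-L=\tfrac12\bigl((2m-1)\tilde H-E_\mathcal N\bigr)$ and note $K_{\tilde X}=2\tilde H$. Combining \eqref{eq: half-system isom} with \eqref{eq: Serre duality for quadratic sheaf} gives
\[
h^p(\mathcal F(m))=h^{2-p}(\mathcal F(3-m)),
\]
since $v\mapsto 4-v$ corresponds to $m\mapsto 3-m$. Formula \eqref{eq: Riemann-Roch for quadratic sheaf} with $\lvert\mathcal N\rvert=35$ gives
\[
\chi(\mathcal F(1))=\chi(\mathcal F(2))=0,\qquad \chi(\mathcal F(0))=\chi(\mathcal F(3))=6.
\]
Since $\mathcal F$ is torsion free of rank one on the integral surface $X$, multiplication by a linear form gives injections $\mathcal F(k)\hookrightarrow\mathcal F(k+1)$. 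The hypothesis $h^0(\mathcal F(2))=0$ therefore forces $h^0(\mathcal F(k))=0$ for all $k\le 2$.

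\emph{Case $m=2$.} We have $h^2(\mathcal F(2))=h^0(\mathcal F(1))=0$ and $h^0(\mathcal F(2))=0$. Hence $h^1(\mathcal F(2))=-\chi(\mathcal F(2))=0$, and by the symmetry above also $h^1(\mathcal F(1))=0$.

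\emph{Case $m=3$.} Here $h^2(\mathcal F(3))=h^0(\mathcal F(0))=0$, so $h^1(\mathcal F(3))=h^0(\mathcal F(3))-6$. It remains to prove $h^0(\mathcal F(3))\le 6$; this is the main step. Take a general plane section $C$. It is a smooth plane sextic of genus $10$ missing the nodes, so $M:=\mathcal F|_C$ is a line bundle with $M^{2}\simeq\mathcal O_C(-1)$, because $2L$ restricts to the hyperplane class and $M$ corresponds to $\mathcal O(-L)|_C$. Then $\deg M=-3$, and $\theta:=M(2)$ satisfies $\theta^2\simeq\mathcal O_C(3)=K_C$, so $\theta$ is a theta characteristic.

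From $0\to\mathcal F(1)\to\mathcal F(2)\to\theta\to0$ and the vanishings $h^0(\mathcal F(2))=h^1(\mathcal F(1))=0$ we get $h^0(C,\theta)=0$. Now $M(3)=\theta(1)$ has degree $15$, and Serre duality gives $h^1(\theta(1))=h^0(\theta(-1))\le h^0(\theta)=0$. Riemann--Roch on $C$ then yields $h^0(M(3))=15-9=6$.

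Finally, the sequence $0\to\mathcal F(2)\to\mathcal F(3)\to M(3)\to0$ gives $h^0(\mathcal F(3))\le h^0(\mathcal F(2))+6=6$. Hence $h^1(\mathcal F(3))=0$, and Proposition~\ref{prop: aCM criterion} shows that $\mathcal N$ is symmetric.

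\emph{Type.} The resolution then has $\chi(\mathcal F)=6$ summands. By \eqref{eq: symmetric splitting type} the type is determined by the degrees of generators of $\bigoplus_k H^0(\mathcal F(k))$. Since $h^0(\mathcal F(k))=0$ for $k\le2$ and $h^0(\mathcal F(3))=6=\operatorname{rank}$, all generators lie in degree $3$. This gives $\mathcal E=\mathcal O(-3)^{\oplus6}$, i.e.\ type $(1,\dots,1)$.
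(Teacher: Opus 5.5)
Your proof is correct. Its first half is exactly the paper's argument: you use the duality $h^p(\mathcal F(m))=h^{2-p}(\mathcal F(3-m))$ together with $\chi(\mathcal F(1))=\chi(\mathcal F(2))=0$ to kill $h^1(\mathcal F(1))=h^1(\mathcal F(2))$.

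Where you differ is the next step. The paper's written proof stops at this point and invokes Proposition~\ref{prop: aCM criterion}. But for $d=6$ that criterion also requires $m=3$, i.e.\ $h^1(\mathcal F(3))=h^1(\mathcal F)=0$. This group is the irregularity $q(\tilde S)$ of the double cover, and it lies outside the range where the Kawamata--Viehweg vanishing applies, so it genuinely needs an argument. Your theta-characteristic step supplies it cleanly: $h^0(C,\theta)=0$ follows from the two vanishings already in hand, hence $h^1(C,M(3))=h^0(C,\theta(-1))=0$. You could even bypass the count of $h^0(\mathcal F(3))$, since the exact sequence $0=H^1(\mathcal F(2))\to H^1(\mathcal F(3))\to H^1(C,M(3))=0$ gives the vanishing directly.

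For the type, the routes also differ. The paper deduces $d_i\le 1$ from $h^0(\mathcal F(2))=0$ and then appeals to the Casnati--Catanese inequalities. You instead read the type off the generator degrees of $\bigoplus_k H^0(\mathcal F(k))$, which, once repaired as below, avoids the external inequalities.

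One sentence in that last step needs repair. The claim ``the resolution then has $\chi(\mathcal F)=6$ summands'' is not a general fact. A summand of diagonal degree $d_i$ contributes $(d_i^3+23d_i)/24$ to $\chi(\mathcal F)$; for instance, type $(1,1,1,3)$ has four summands but $\chi(\mathcal F)=7$. So this cannot be used to fix the rank before the type is known.

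The fix uses what you already proved. Since $h^1(\mathcal F(2))=h^2(\mathcal F(1))=0$, the sheaf $\mathcal F$ is Castelnuovo--Mumford $3$-regular. Hence $\bigoplus_k H^0(\mathcal F(k))$ is generated by the $6$-dimensional space $H^0(\mathcal F(3))$, and the minimal split resolution must be $\mathcal O(-4)^{\oplus 6}\to\mathcal O(-3)^{\oplus 6}$. Alternatively, positivity of the $d_i$ gives every twist $\tfrac12(7-d_i)\le 3$, while all minimal generators sit in degree $\ge 3$, which forces $d_i=1$ for all $i$.
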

\begin{proof}
    Applying Serre duality \eqref{eq: Serre duality for quadratic sheaf}, we have
    \begin{equation}\label{eq: Serre duality for degree 6}
        h^p(X,\, \mathcal F(2)) = h^p\bigl(\tilde X,\, \mathcal O_{\tilde X}\bigl(\tfrac 12(3\tilde H-E_\mathcal N)\bigr)\bigr) = h^{2-p}\bigl( \tilde X,\, \mathcal O_{\tilde X}\bigl(  \tfrac 12 ( \tilde H - E_\mathcal N ) \bigr) \bigr) = h^{2-p}(X,\,\mathcal F(1)).
    \end{equation}
    Hence, $h^0(X,\, \mathcal F(1))=h^2(X,\,\mathcal F(2))=0$ and $h^2(X,\, \mathcal F(1)) = h^0(X,\,\mathcal F(2))=0$. By \eqref{eq: Riemann-Roch for quadratic sheaf}, $\chi(\mathcal F(1)) = \chi(\mathcal F(2))=0$, thus by Proposition~\ref{prop: aCM criterion}, $\mathcal N$ is symmetric.

    If $\mathcal N$ is symmetric of type $(d_1,\dots,d_r)$, then
    \[
        0 \to \bigoplus_{i=1}^r \mathcal O_{\PP^3}\Bigl(\tfrac12(-7-d_i)\Bigr) \to \bigoplus_{i=1}^r \mathcal O_{\PP^3}\Bigl(\tfrac12(-7+d_i)\Bigr) \to \mathcal F \to 0.
    \]
    As $h^0(X,\,\mathcal F(2))=0$, $d_i \leq 1$ for each $i$. Invoking the inequalities in \cite[p.249]{CasCat}, we see that $r = 6$ and $d_i=1$ for all $i$.
\end{proof}

In order to complete the proof, we need to show that there exists a half-even set $\mathcal N$ of cardinality $35$ such that $h^0(X ,\, \mathcal F(2))=0$. First, we gather the consequences of the assumption $h^0(X,\, \mathcal F(2)) \neq 0$.
\smallskip
\begin{center}
    (\hypertarget{dagger}{$\dagger$}) From now on,\footnote{more precisely, from Proposition~\ref{prop: h^0(F(2))=1} to Corollary~\ref{cor: key long exact seq}} we fix a half-even set $\mathcal N$ of cardinality $35$, and assume $h^0(X,\, \mathcal F(2) ) \neq 0$.
\end{center}

\begin{proposition}\label{prop: h^0(F(2))=1}
    We have $h^0(X,\, \mathcal F(2)) = 1$.
\end{proposition}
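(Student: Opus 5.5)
The plan is to translate $h^0(X,\mathcal F(2))$ into the linear system $\mathsf L_\mathcal N(3)$ and then use Endrass's stability theory in degree $v=3=\tfrac d2$. By \eqref{eq: Serre duality for degree 6} we have $h^0(X,\mathcal F(2)) = h^0\bigl(\tilde X,\mathcal O_{\tilde X}(\tfrac12(3\tilde H - E_\mathcal N))\bigr)$. Under (\hyperlink{dagger}{$\dagger$}) this number is nonzero, so $\mathsf L_\mathcal N(3)\neq\emptyset$ and $\mathsf Q_\mathcal N(3)\neq\emptyset$. I argue by contradiction and assume $h^0(X,\mathcal F(2))\geq 2$, that is, $\dim \mathsf L_\mathcal N(3)\geq 1$.

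First, by \cite[Proposition~3.6]{Endrass}(1), $\mathcal N$ is not stable in degree $3$, so $\mathsf F_\mathcal N(3)\neq\emptyset$. Second, Proposition~\ref{prop: semistable criteria}(1) with $v=\tfrac d2=3$ says that instability in degree $3$ forces $\lvert\mathcal N\rvert = 27$. Since $\lvert\mathcal N\rvert=35$, $\mathcal N$ is semistable in degree $3$. By part (2) of the same proposition of Endrass, $\mathsf F_\mathcal N(3)$ therefore contains a square $2W$.

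Next I pin down the degree $b$ of the base part $\mathsf B_\mathcal N(3)$. Since $\deg \mathsf B_\mathcal N(3) + \deg 2W = 3$, we get $b = 3 - 2\deg W$, which is odd and at most $3$. The case $b=3$ would give $\mathsf F_\mathcal N(3)=\emptyset$, which we have excluded, so $b=1$. By the remark following \cite[Proposition~3.6]{Endrass}, $\mathsf B_\mathcal N(1)=\mathsf B_\mathcal N(3)$ and $\mathcal N$ is stable in degree $1$. In particular $\mathsf Q_\mathcal N(1)\ne\emptyset$, so $\mathcal N$ is cut out by a plane.

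Finally, Theorem~\ref{thm: Endrass min half-even set} with $d=6$ says that a half-even set cut out by a plane has exactly $\tfrac12 d(d-1)=15$ elements. This contradicts $\lvert \mathcal N\rvert =35$, so $h^0(X,\mathcal F(2))=1$.

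The step I expect to need the most care is the passage from ``semistable and not stable in degree $3$'' to ``stable in degree $1$''. One must check that the base curve $\mathsf B_\mathcal N(3)$, of degree $1$, really cuts out $\mathcal N$, which is the content of the remark after \cite[Proposition~3.6]{Endrass} that squares cut out $\emptyset$. One must also check that the parity of $\deg\mathsf B_\mathcal N(3)$ is forced to be odd, as shown above.
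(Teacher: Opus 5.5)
Your proposal is correct and follows essentially the same route as the paper's proof. Both arguments use Proposition~\ref{prop: semistable criteria}(1), since $35\neq 27$, to get semistability in degree $3$, reduce to stability in degree $1$ or $3$, rule out degree $1$ via Theorem~\ref{thm: Endrass min half-even set}, and conclude $\dim\mathsf L_\mathcal N(3)=0$. The only differences are that you phrase it by contradiction and make explicit the parity step $b=3-2\deg W$, which the paper leaves implicit.
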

\begin{proof}
    Since $H^0(X,\,\mathcal F(2)) \simeq H^0\bigl( \tilde X, \mathcal O_{\tilde X}\bigl( \frac12 (3\tilde H - E_\mathcal N) \bigr)\bigr) \neq 0$, by Proposition~\ref{prop: semistable criteria} we see that $\mathcal N$ is semistable in degree $3$. Thus, $\mathcal N$ is stable in degree $1$ or $3$. By Theorem~\ref{thm: Endrass min half-even set}, $\mathcal N$ cannot be cut out by a plane, thus $\mathcal N$ is stable in degree $3$. In particular, $h^0\bigl( \tilde X, \mathcal O_{\tilde X}\bigl( \frac12 (3\tilde H - E_\mathcal N) \bigr)\bigr) = 1$.
\end{proof}

\begin{proposition}\label{prop: resolution over P^2}
    Let $H \subset \PP^3$ be a general plane. Then, $\mathcal F\big\vert_H$ fits into one of the following short exact sequences on $H$: either
    \begin{equation}\label{eq: resolution type 33}
        0 \to \mathcal O_H(-5)^{\oplus 2} \to \mathcal O_H(-2)^{\oplus 2} \to \mathcal F \big\vert_H \to 0,
    \end{equation}
    or
    \begin{equation}\label{eq: resolution type 1113}
        0 \to \mathcal O_H(-4)^{\oplus 3} \oplus \mathcal O_H(-5) \to \mathcal O_H(-3)^{\oplus 3} \oplus \mathcal O_H(-2) \oplus \to \mathcal F \big\vert_H \to 0.
    \end{equation}
\end{proposition}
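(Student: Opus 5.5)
The plan is to reduce the statement to the classification of symmetric resolutions of a theta characteristic on a smooth plane sextic. I would determine that theta characteristic's cohomology from (\hyperlink{dagger}{$\dagger$}) and Proposition~\ref{prop: h^0(F(2))=1}.

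\textbf{Setup.} For a general plane $H$, the curve $C = X \cap H$ is a smooth plane sextic of genus $10$ avoiding $\Sing X$, and $K_C = \mathcal O_C(3)$. Away from the nodes we have $\mathcal F = \mathcal O_{\tilde X}(-L)$, and $2L|_C = \tilde H|_C$. Hence $M := \mathcal F|_H$ is a line bundle on $C$ with $M^{\otimes 2} \simeq \mathcal O_C(-1)$, and $\theta := M(2)$ satisfies $\theta^2 \simeq K_C$, so $\theta$ is a theta characteristic. Since $M$ is a line bundle on a smooth curve, it is Cohen--Macaulay of dimension $1$ on $H \simeq \PP^2$. By Auslander--Buchsbaum it therefore has a two-term resolution by split bundles. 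Restricting the symmetric resolution of Theorem~\ref{thm: Casnati-Catanese}, or invoking Beauville's theorem for theta characteristics, shows this resolution can be taken symmetric, of the form
\[
0 \to \textstyle\bigoplus_i \mathcal O_H\bigl(\tfrac12(-7-d_i)\bigr) \to \textstyle\bigoplus_i \mathcal O_H\bigl(\tfrac12(-7+d_i)\bigr) \to M \to 0
\]
with $\sum d_i = 6$ and all $d_i$ odd. The possible types are thus $(1^6)$, $(1,1,1,3)$, $(3,3)$, $(1,5)$ and $(6)$, the last excluded by parity. The generator degrees are read off from the values $h^0(M(k)) = h^0(\theta(k-2))$.

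\textbf{Cohomology of $\theta$.} I would use the restriction sequences $0 \to \mathcal F(k-1) \to \mathcal F(k) \to M(k) \to 0$ on $X$. We know $h^0(\mathcal F(2)) = 1$ and $h^0(\mathcal F(1)) = h^2(\mathcal F(2)) = 0$ by \eqref{eq: Serre duality for degree 6}. Moreover \eqref{eq: Riemann-Roch for quadratic sheaf} gives $\chi(\mathcal F(1)) = \chi(\mathcal F(2)) = 0$, so $h^1(\mathcal F(1)) = h^1(\mathcal F(2)) = 1$. Consequently $H^0(\mathcal F(2)) \hookrightarrow H^0(\theta)$ and $h^0(\theta) \le 1 + h^1(\mathcal F(1)) = 2$, so $h^0(\theta) \in \{1,2\}$. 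In particular type $(1^6)$ is excluded, since it would force $h^0(\theta)=0$.

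\textbf{Main obstacle.} The step I expect to be hardest is showing $h^0(\theta(-1)) = h^0(M(1)) = 0$, which rules out type $(1,5)$ (a generator in degree $1$). From $0 \to \mathcal F \to \mathcal F(1) \to M(1) \to 0$ we get $h^0(M(1)) \le h^0(\mathcal F(1)) + h^1(\mathcal F) = h^1(\mathcal F)$. So I would try to prove $h^1(\mathcal F) = 0$, first attempting to use \eqref{eq: Serre duality for quadratic sheaf}, \eqref{eq: half-system isom} and the Kawamata--Viehweg vanishing in the proof of Proposition~\ref{prop: aCM criterion}. If that fails, the fallback is geometric: a section of $\theta(-1)$ would be an effective degree-$3$ divisor $D$ on $C$ with $2D + 2H|_C \sim K_C$. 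I would try to show that this is incompatible with $\mathcal N$ being stable in degree $3$ rather than cut out by a plane, using Theorem~\ref{thm: Endrass min half-even set}.

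\textbf{Conclusion.} With no generators in degree $\le 1$, the remaining types are decided by $h^0(\theta)$. If $h^0(\theta)=2$, there are two degree-$2$ generators, giving type $(3,3)$ and hence \eqref{eq: resolution type 33}. If $h^0(\theta)=1$, the single degree-$2$ generator forces type $(1,1,1,3)$, and the resolution is \eqref{eq: resolution type 1113}.
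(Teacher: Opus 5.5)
Your route is the paper's. You restrict to a general plane and use the Casnati--Catanese list of types $(1,5)$, $(3,3)$, $(1,1,1,3)$, $(1,1,1,1,1,1)$. You then squeeze $h^0(\theta)=h^0(H,\mathcal F(2)|_H)$ between $1$ and $2$ via $0\to\mathcal F(1)\to\mathcal F(2)\to\mathcal F(2)|_H\to 0$, using $h^0(\mathcal F(1))=0$, $h^0(\mathcal F(2))=1$ and $h^1(\mathcal F(1))=1$. (Note that $h^0(\mathcal F(1))=0$ comes from Theorem~\ref{thm: Endrass min half-even set}: since $|\mathcal N|=35\neq 15$, $\mathcal N$ is not cut out by a plane. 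The duality \eqref{eq: Serre duality for degree 6} only identifies this number with $h^2(\mathcal F(2))$.)

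The weak point is your ``main obstacle''. As written, your conclusion rests on $h^0(M(1))=0$, which you never establish, and the route you suggest does not work. The Kawamata--Viehweg input from the proof of Proposition~\ref{prop: aCM criterion} only kills $h^1$ of $\tfrac12(v\tilde H-E_{\mathcal N})$ for $v\le -3$ or $v\ge 7$. But $\mathcal F$ corresponds to $v=-1$, and its Serre dual $\mathcal F(3)$ to $v=5$. In the paper, $h^1(\mathcal F)=0$ under $(\dagger)$ is obtained only later, in Corollary~\ref{cor: key long exact seq}. That goes through Proposition~\ref{prop: to show h^1(F)=0}, whose proof uses the very statement you are proving, so pursuing it here would be circular. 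Your geometric fallback via tritangent lines is only a sketch.

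The step is, however, superfluous: you already have what you need. For type $(1,5)$ the generator term of $M$ is $\mathcal O_H(-1)\oplus\mathcal O_H(-3)$. Hence $h^0(\theta)=h^0(\mathcal O_H(1))+h^0(\mathcal O_H(-1))=3$, contradicting your bound $h^0(\theta)\le 2$.

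In general, the types $(1,5)$, $(3,3)$, $(1,1,1,3)$, $(1,1,1,1,1,1)$ give $h^0(\theta)=3,2,1,0$ respectively. So $h^0(\theta)\in\{1,2\}$ by itself leaves exactly the types $(3,3)$ and $(1,1,1,3)$, which is precisely the paper's argument. Drop the ``main obstacle'' paragraph and your proof is complete.
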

\begin{proof}
    By \cite[p.249]{CasCat}, there exists a short exact sequence
    \[
        0 \to \bigoplus_i \mathcal O_H\Bigl( \frac{-7-d_i}{2} \Bigr) \to \bigoplus_i \mathcal O_H\Bigl( \frac{-7+d_i}{2} \Bigr) \to \mathcal F\big\vert_H \to 0,
    \]
    where the sequence $(d_i)_i$ is either $(1,5)$, $(3,3)$, $(1,1,1,3)$, or $(1,1,1,1,1,1)$. The sequences $(3,3)$ and $(1,1,1,3)$ correspond to \eqref{eq: resolution type 33} and \eqref{eq: resolution type 1113}, respectively. If $(d_i)_i = (1,5)$, then
    \[
        0 \to \mathcal O_H(-6) \oplus \mathcal O_H(-4) \to \mathcal O_H(-1) \oplus \mathcal O_H(-3) \to \mathcal F\big\vert_H \to 0,
    \]
    thus $h^0(H, \, \mathcal F(2) \big\vert_H ) = 3$. By \eqref{eq: Serre duality for degree 6} and Proposition~\ref{prop: h^0(F(2))=1}, we have $h^1(X,\, \mathcal F(2)) = h^1(X,\, \mathcal F(1)) = 1$. Then, from the short exact sequence
    \[
        0 \to \mathcal F(1) \to \mathcal F(2) \to \mathcal F(2) \big\vert_H \to 0,
    \]
    we get $h^0(H, \, \mathcal F(2)\big\vert_H) \leq h^0(X,\, \mathcal F(2)) + h^1(X,\,\mathcal F(1)) = 2$, a contradiction. If $(d_i)_i = (1,1,1,1,1,1)$, then we have $h^0(H,\, \mathcal F(2)\big\vert_H) = 0$, thus $h^0(X,\,\mathcal F(1)) = h^0(X,\,\mathcal F(2))=1$. This contradicts the stability of $\mathcal N$ in degree $3$.
\end{proof}

\begin{proposition}\label{prop: to show h^1(F)=0}
    Let $H\subset \PP^3$ be a general plane. The map $\gamma \colon H^1(X,\,\mathcal F) \to H^1(H,\, \mathcal F\big\vert_H)$ is zero.
\end{proposition}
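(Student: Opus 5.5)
The plan is to translate the vanishing of $\gamma$ into a lifting statement for sections, using Serre duality. I would then try to prove that lifting statement from the structure of $\mathcal F|_H$ given by Proposition~\ref{prop: resolution over P^2}. Throughout we work under (\hyperlink{dagger}{$\dagger$}), with $\lvert\mathcal N\rvert=35$.

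\emph{Step 1: reformulation.} Tensor $0\to\mathcal O_X(-1)\to\mathcal O_X\to\mathcal O_{X\cap H}\to 0$ with $\mathcal F$ to get
\[
0\to\mathcal F(-1)\to\mathcal F\to\mathcal F\big\vert_H\to0 .
\]
Its long exact sequence shows that $\gamma=0$ if and only if $H^1(X,\mathcal F(-1))\to H^1(X,\mathcal F)$ is surjective. Now $\mathcal F(-1)$ corresponds to $v=-3$ in the proof of Proposition~\ref{prop: aCM criterion}, so $h^1(X,\mathcal F(-1))=0$. Hence the statement is equivalent to $h^1(X,\mathcal F)=0$.

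On the dual side, \eqref{eq: Serre duality for quadratic sheaf} gives $h^p(\mathcal F(m))=h^{2-p}(\mathcal F(3-m))$. Under this duality, $\gamma$ corresponds to the coboundary $H^0(H,\mathcal F(4)|_H)\to H^1(X,\mathcal F(3))$ of the sequence twisted by $4$. So $\gamma=0$ if and only if the restriction
\[
\rho_4\colon H^0(X,\mathcal F(4))\to H^0(H,\mathcal F(4)|_H)
\]
is surjective.

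\emph{Step 2: dimension count.} From \eqref{eq: Riemann-Roch for quadratic sheaf} with $\lvert\mathcal N\rvert=35$ we get $\chi(\mathcal F(4))=18$ and $\chi(\mathcal F(3))=6$. Using the vanishings above, this gives $h^0(\mathcal F(4))=18$ and $h^0(\mathcal F(3))=6+h^1(\mathcal F)$. From either resolution \eqref{eq: resolution type 33} or \eqref{eq: resolution type 1113}, $h^0(H,\mathcal F(4)|_H)=12$. So counting dimensions alone only reproduces the condition $h^1(\mathcal F)=0$, and a genuinely module-theoretic argument is needed.

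\emph{Step 3: lifting generators.} I would regard $M_H=\bigoplus_m H^0(H,\mathcal F(m)|_H)$ as a module over $\CC[H]$. By the resolutions it is generated in degrees $2$ and $3$, so $M_H$ in degree $4$ is spanned by $S_2\cdot(M_H)_2+S_1\cdot(M_H)_3$. It therefore suffices to show that the image of $\rho_4$ contains these products.

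\begin{itemize}
\item In degree $2$: $h^0(\mathcal F(1))=0$, so $H^0(\mathcal F(2))=\CC s$ injects into $(M_H)_2$. Here $s$ is the section defining the stable cubic $V$ of Proposition~\ref{prop: h^0(F(2))=1}.
\item In case \eqref{eq: resolution type 1113}, $\dim(M_H)_2=1$, so the degree-$2$ generator lifts. This leaves the three degree-$3$ generators.
\item In case \eqref{eq: resolution type 33}, $\dim(M_H)_2=2$ while only one class lifts. The missing class is detected by $H^1(\mathcal F(1))\cong\CC$.
\end{itemize}

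In both cases I would supply the missing liftings in degree $4$ (not degree $3$) by multiplying $s$ by sections of $\mathcal O_X(2)$, and by using products of sections in the quadratic system $\mathsf Q_\mathcal N(\cdot)$ through the cubic $V$. The idea is to exploit that $\mathcal F(4)$ has many more global sections ($18$) than $\mathcal F(3)$.

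\emph{Main obstacle.} The hardest step is the degree-$3$ generators, equivalently excluding $h^1(\mathcal F)\neq0$. First I would try to show that the $6=\chi(\mathcal F(3))$ independent sections of $\mathcal F(3)$ restrict to $H$ with span complementary to $s\cdot H^0(\mathcal O_H(1))$. For this I would use the symmetric resolution over $H$ and the fact that $V\cap H$ is a reduced cubic curve for general $H$, which holds by stability in degree $3$. If that fails, the fallback is the geometric route: use the codewords of Table~\ref{tbl: minimal half-even sets} to express $\mathcal N$ as a symmetric difference involving plane-cut sets. The resulting plane-section decompositions of $2D$ would then give explicit sections of $\mathcal F(4)$ whose restrictions span $H^0(H,\mathcal F(4)|_H)$.
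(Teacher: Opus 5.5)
\textbf{There is a genuine gap in Step 3.} Your Steps 1 and 2 are correct. Since $h^1(X,\mathcal F(-1))=0$, the vanishing of $\gamma$ is equivalent to $h^1(X,\mathcal F)=0$, and dually to surjectivity of $\rho_4$; a dimension count alone is circular.

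The crux is that $S_1\cdot(M_H)_3\subseteq\image\rho_4$. You never establish it; you only defer it to strategies you ``would try'', and the first of them cannot work. Write $r_3\colon H^0(X,\mathcal F(3))\to H^0(H,\mathcal F(3)|_H)$.
\begin{itemize}
\item The image of $r_3$ always contains $s|_H\cdot H^0(\mathcal O_H(1))$, obtained by restricting $s\cdot H^0(\mathcal O_{\PP^3}(1))$. So it is never complementary to that subspace.
\item $\ker r_3\cong H^0(X,\mathcal F(2))\cong\CC$. In the situation you are aiming for ($h^1(\mathcal F)=0$, so $h^0(\mathcal F(3))=6$), the image of $r_3$ is therefore only $5$-dimensional inside the $6$-dimensional $(M_H)_3$.
\end{itemize}
Thus the degree-$3$ generators do not lift, and multiplying $s$ by quadrics only yields $S_2\cdot s|_H$. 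Proving $\rho_4$ onto directly would need knowledge of $H^0(X,\mathcal F(4))$ beyond products of lower-degree sections, which is essentially the claim itself. The codeword fallback concerns a particular $\mathcal N$ and the special plane $\bar H$. It says nothing about an arbitrary $\mathcal N$ under ($\dagger$) and a general $H$.

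\textbf{The missing idea is to argue by contradiction}, and you already have every ingredient.
\begin{itemize}
\item Suppose $h^1(X,\mathcal F)\neq0$. Then $\dim\image r_3=h^0(\mathcal F(3))-h^0(\mathcal F(2))=5+h^1(\mathcal F)\le h^0(H,\mathcal F(3)|_H)=6$. This forces $h^1(\mathcal F)=1$ and $r_3$ surjective.
\item Since $M_H$ is generated in degrees $\le3$, we get $(M_H)_4=S_1\cdot(M_H)_3=S_1\cdot\image r_3\subseteq\image\rho_4$. So $\rho_4$ is onto.
\item Hence $H^1(\mathcal F(3))$ injects into $H^1(\mathcal F(4))=H^1(\mathcal F(-1))^{*}=0$. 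This gives $h^1(\mathcal F)=h^1(\mathcal F(3))=0$, a contradiction.
\end{itemize}

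This is precisely the Serre-dual form of the paper's proof.
\begin{itemize}
\item The paper assumes $h^1(\mathcal F)\neq0$. The vanishing $h^0(H,\mathcal F(1)|_H)=0$ then gives $h^1(\mathcal F)=h^1(\mathcal F(1))=1$.
\item Hence $H^1(X,\mathcal F(1))\to H^1(H,\mathcal F(1)|_H)$ is zero, which is dual to $r_3$ being onto.
\item Multiplying by the linear form of a second plane $H'$ gives $\image\gamma\subseteq\bigcap_{H'}\ker\lambda_{H'}$.
\item This intersection vanishes by the explicit resolutions over $H$ and Lemma~\ref{lem: intersection of kernels}, which is dual to generation of $M_H$ in degrees $\le 3$.
\end{itemize}
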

\begin{proof}
    If $h^1(X,\,\mathcal F)=0$, then there is nothing to prove. Assume $h^1(X,\,\mathcal F) \neq 0$. By Propposition~\ref{prop: resolution over P^2}, $h^0(H,\, \mathcal F(1)\big\vert_H)=0$. From the short exact sequence
    \[
        0 \to \mathcal F \to \mathcal F(1) \to \mathcal F(1)\big\vert_H \to 0,
    \]
    we have $h^1(X,\,\mathcal F) = h^1(X,\, \mathcal F(1)) = 1$. In particular, the map
    \[
        H^1(X,\, \mathcal F(1)) \to H^1(H,\, \mathcal F(1)\big\vert_H)
    \]
    is zero. Let $H' \subset \PP^3$ be a general plane. Then, the multiplication by the corresponding linear form yields the commutative diagram as follows.
\begin{center}
\begin{tikzpicture}
    \pgfmathsetmacro{\x}{2}
    \pgfmathsetmacro{\y}{1.5}
    \draw(0,0) node[anchor=center] (00) {$\mathcal F$};
    \draw(\x,0) node[anchor=center] (10) {$\mathcal F(1)$};
    \draw(0,-\y) node[anchor=center] (01) {$\mathcal F\big\vert_H$};
    \draw(\x,-\y) node[anchor=center] (11) {$\mathcal F(1)\big\vert_H$};
    \draw[->] (00)--(10) node[midway, above] {$\scriptstyle s_{H'}$};
    \draw[->] (01)--(11);
    \draw[->] (00)--(01);
    \draw[->] (10)--(11);
\end{tikzpicture}
\end{center}
Since the right vertical map induces the zero map between first cohomology groups, we have
\[
    \image \gamma \subseteq \ker \lambda_{H'},
\]
where $\lambda_{H'} \colon H^1(H,\,\mathcal F\big\vert_H) \to H^1(H,\, \mathcal F(1)\big\vert_H)$.

To prove that $\gamma$ is zero, it suffices to show
\begin{equation}\label{eq: vanishing interseciton of kernels}
    \bigcap_{\substack{H'\subset\PP^3 \\\text{general}}} \ker \lambda_{H'} = 0.      
\end{equation}
Suppose \eqref{eq: resolution type 33} holds in Proposition~\ref{prop: resolution over P^2}. We have a commutative diagram
\begin{center}
\begin{tikzpicture}
    \pgfmathsetmacro{\x}{3}
    \pgfmathsetmacro{\y}{1.5}
    \draw(0,0) node[anchor=center] (00) {$0$};
    \draw($(00)+(0.66*\x,0)$) node[anchor=center] (10) {$\mathcal O_H(-4)^{\oplus 2}$};
    \draw($(10)+(\x,0)$) node[anchor=center] (20) {$\mathcal O_H(-1)^{\oplus 2}$};
    \draw($(20)+(\x,0)$) node[anchor=center] (30) {$\mathcal F(1)\big\vert_H$};
    \draw($(30)+(0.66*\x,0)$) node[anchor=center] (40) {$0$};

    \draw(0,\y) node[anchor=center] (01) {$0$};
    \draw($(01)+(0.66*\x,0)$) node[anchor=center] (11) {$\mathcal O_H(-5)^{\oplus 2}$};
    \draw($(11)+(\x,0)$) node[anchor=center] (21) {$\mathcal O_H(-2)^{\oplus 2}$};
    \draw($(21)+(\x,0)$) node[anchor=center] (31) {$\mathcal F\big\vert_H$};
    \draw($(31)+(0.66*\x,0)$) node[anchor=center] (41) {$0$};

    \foreach \i in {0,1,2,3}{
        \pgfmathtruncatemacro{\j}{\i+1}
        \draw[->] (\i0)--(\j0);
        \draw[->] (\i1)--(\j1);
    }
    \foreach \i in {1,2,3}{
        \draw[->] (\i1)--(\i0);
    }
\end{tikzpicture}
\end{center}
where the vertical maps are induced by the linear form defining $H'$. The map $\lambda_{H'}$ is identified with
\[
    H^2(H,\, \mathcal O_H(-5)^{\oplus 2}) \to H^2(H,\,\mathcal O_H(-4)^{\oplus 2}),
\]
or equivalently via Serre duality, $H^0(H,\, \mathcal O_H(2)^{\oplus 2})^* \to H^0(H,\, \mathcal O_H(1)^{\oplus 2})^*$. Since the map $\mathcal O_H(-5)^{\oplus 2} \to \mathcal O_H(-4)^{\oplus 2}$ is diagonal, the previous map is the direct sum of two copies of 
\[
    \kappa_{H'} \colon H^0(H,\, \mathcal O_H(2))^* \to H^0(H,\, \mathcal O_H(1))^*.
\]
In particular, we have $\ker \lambda_{H'} \simeq ( \ker \kappa_{H'} ) ^{\oplus 2}$, and \eqref{eq: vanishing interseciton of kernels} follows by Lemma~\ref{lem: intersection of kernels}. Now, assume \eqref{eq: resolution type 1113} holds in Proposition~\ref{prop: resolution over P^2}. Resolving the map $\mathcal F\big\vert_H \to \mathcal F(1)\big\vert_H$ using \eqref{eq: resolution type 1113} as in the previous case, we obtain a commutative diagram
\begin{center}
\begin{tikzpicture}[scale=1]
    \pgfmathsetmacro{\x}{3}
    \pgfmathsetmacro{\y}{1.5}
    
    \draw(0,\y) node[anchor=center] (01) {$0$};
    \draw($(01)+(0.66*\x,0)$) node[anchor=center] (11) {$H^1(H,\, \mathcal F\big\vert_H)$};
    \draw($(11)+(1.5*\x,0)$) node[anchor=center] (21) {$H^2(H,\, {\scriptstyle \mathcal O_H(-5)  \oplus \mathcal O_{H}(-4)^{\oplus 3}})$};
    \draw($(21)+(1.5*\x,0)$) node[anchor=center] (31) {$H^2(H,\, \mathcal O_H(-3)^{\oplus 3})$};
    \draw($(31)+(0.8*\x,0)$) node[anchor=center] (41) {$0$};    
    
    \draw(0,0) node[anchor=center] (00) {$0$};
    \draw($(11)+(0,-\y)$) node[anchor=center] (10) {$H^1(H,\, \mathcal F(1)\big\vert_H)$};
    \draw($(21)+(0,-\y)$) node[anchor=center] (20) {$H^2(H,\, {\scriptstyle \mathcal O_H(-4)  \oplus \mathcal O_{H}(-3)^{\oplus 3}})$};
    \draw($(31)+(0,-\y)$) node[anchor=center] (30) {$0$};

    \foreach \i in {0,1,2}{
        \pgfmathtruncatemacro{\j}{\i+1}
        \draw[->] (\i0)--(\j0);
        \draw[->] (\i1)--(\j1);
    }
    \draw[->] (31)--(41);
    \draw[->] (11)--(10) node[midway, right]{$\scriptstyle \lambda_{H'}$};
    \draw[->] (21)--(20) node[midway, right]{$\scriptstyle \rho_{H'}$};
    \draw[->] (31)--(30);
\end{tikzpicture}
\end{center}%
We have $\ker \lambda_{H'} \subseteq \ker \rho_{H'}$ by Snake lemma. Using Serre duality, we identify the map $\rho_{H'}$ with the direct sum of $\kappa_{H'} \colon H^0(H,\, \mathcal O_H(2))^* \to H^0(H,\, \mathcal O_H(1))^*$ with three copies of
\[
    \kappa_{H'}'\colon H^0(H,\, \mathcal O_H(1))^* \to H^0(H,\, \mathcal O_H)^*.
\]
Thus, $\ker \rho_{H'} = \ker \kappa_{H'} \oplus (\ker \kappa_{H'}')^{\oplus 3}$, and by Lemma~\ref{lem: intersection of kernels}, we have
\[
     \bigcap_{\substack{H'\subset\PP^3 \\\text{general}}} \ker \rho_{H'}  = 0,
\]
which implies \eqref{eq: vanishing interseciton of kernels} as desired.
\end{proof}

\begin{lemma}\label{lem: intersection of kernels}
    Let $\ell \in H^0(\PP^2,\, \mathcal O_{\PP^2}(1))$ be a nonzero linear form, and let 
    \[
        \kappa_\ell \colon H^0(\PP^2,\, \mathcal O_{\PP^2}(2))^* \to H^0(\PP^2, \, \mathcal O_{\PP^2}(1))^*
    \]
    be the map induced by $\ell$. Then, for any dense subset $U \subset H^0(\PP^2,\,\mathcal O_{\PP^2}(1))$, we have $\bigcap_{\ell \in U} \ker \kappa_\ell = 0$. The same is true for $\kappa'_\ell \colon H^0(\PP^2,\, \mathcal O_{\PP^2}(1))^* \to H^0(\PP^2, \, \mathcal O_{\PP^2})^*$.
\end{lemma}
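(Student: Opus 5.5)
The plan is to identify $\kappa_\ell$ with an explicit contraction map and then show that a functional killed by all $\kappa_\ell$ must vanish. Write $S_k = H^0(\PP^2,\mathcal O_{\PP^2}(k))$. The map $\kappa_\ell$ is the transpose of multiplication $\mu_\ell\colon S_1 \to S_2$, $f \mapsto \ell f$. So for $\phi \in S_2^*$ we have $\kappa_\ell(\phi) = \phi\circ\mu_\ell$. Hence $\phi \in \ker\kappa_\ell$ if and only if $\phi$ vanishes on the subspace $\ell\cdot S_1 \subset S_2$.

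It follows that
\[
\bigcap_{\ell\in U}\ker\kappa_\ell = \Bigl(\sum_{\ell\in U} \ell\cdot S_1\Bigr)^{\perp}.
\]
The statement therefore reduces to showing that $\sum_{\ell\in U}\ell\cdot S_1 = S_2$.

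To prove this, I would use the density of $U$. The span of $U$ is a linear subspace of $S_1$ containing a dense set, hence it is closed and equal to $S_1$. Since $\ell \mapsto \ell S_1$ is linear in $\ell$, we get $\sum_{\ell\in U}\ell S_1 \supseteq \operatorname{span}(U)\cdot S_1$. This contains $S_1\cdot S_1$, which is all of $S_2$ because every quadratic monomial $x_ix_j$ is a product of linear forms.

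The case of $\kappa'_\ell$ is identical. Here $\ker\kappa'_\ell = (\ell\cdot S_0)^\perp = (\mathbb C\ell)^\perp$, so the intersection over $U$ is $\operatorname{span}(U)^\perp = S_1^\perp = 0$.

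The only step needing care is the justification that "dense" implies spanning. In the Zariski topology, a proper linear subspace is closed, so a dense set cannot lie in one; the same holds in the analytic topology. No other obstacle is expected; the rest is linear algebra.
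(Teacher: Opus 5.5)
Your proof is correct and is essentially the paper's argument in coordinate-free form. The paper writes $\varphi$ as a symmetric $3\times 3$ matrix $(\varphi_{ij})$ and shows $\varphi\in\ker\kappa_\ell$ iff $(\varphi_{ij})\ell=0$, so a nonzero $\varphi$ is killed only by $\ell$ in a proper linear subspace, which the dense set $U$ cannot lie in; your identification $\bigcap_{\ell\in U}\ker\kappa_\ell=(\operatorname{span}(U)\cdot S_1)^{\perp}=(S_1\cdot S_1)^{\perp}=0$ packages the same two facts (density forces spanning, and products of linear forms span the quadrics) without choosing a basis.
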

\begin{proof}
    Let $x_0,x_1,x_2$ be homogeneous coordinates on $\PP^2$, and let $\{ f_{\alpha\beta}\}_{0 \leq \alpha\leq \beta \leq 2}$ be a basis for $H^0(\PP^2,\, \mathcal O_{\PP^2}(2))^*$ determined by
    \[
        \langle f_{\alpha\beta}, \, x_i x_j\rangle = \left\{
        \begin{array}{ll}
            1 & \text{if }\{\alpha,\beta\} = \{i,j\} \\
            0 & \text{otherwise}.
        \end{array}
        \right.
    \]
    For notational convenience, we set $\varphi_{\alpha\beta} := \varphi_{\beta\alpha}$ for $\alpha > \beta$. Let $\ell = \ell_0 x_0 + \ell_1 x_1 + \ell_2 x_2$ and let $\varphi = \sum_{\alpha\leq \beta} \varphi_{\alpha\beta}f_{\alpha\beta} \in H^0(\PP^2,\, \mathcal O_{\PP^2}(2))^* $. Then, $\kappa_\ell(\varphi)$ is defined as follows: for a linear form $h = h_0 x_0 + h_1 x_1 + h_2 x_2$, $\kappa_\ell(\varphi)$ maps $h$ to
    \begin{align*}
        \langle \kappa_\ell(\varphi), h \rangle &= \langle \varphi , \ell h \rangle \\
        &= \sum_{\alpha\leq \beta} \sum_{i,j} \varphi_{\alpha\beta} h_i \ell_j \langle f_{\alpha\beta} , x^ix^j \rangle \\
        &= \sum_i h_i \bigl (\sum_j \varphi_{ij} \ell_j\bigr ).
    \end{align*}
    Thus, we have
    \begin{equation}\label{eq: iff condition in lemma}
        \varphi \in \ker \kappa_\ell \iff \sum_j \varphi_{ij} \ell_j=0\ \ \text{for each }i=0,1,2.
    \end{equation}
    If $\varphi \neq 0$, then the right hand side of \eqref{eq: iff condition in lemma} imposes nontrivial linear relations on $\{\ell_i\}$, thus there always exists $\ell' \in U$ such that $\varphi \not\in \ker \kappa_{\ell'}$. It follows that $\bigcap_{\ell \in U} \ker \kappa_\ell = 0$. The last statement can be proved analogously.
\end{proof}

\begin{corollary}\label{cor: key long exact seq}
    For any plane $\bar H \subset \PP^2$, we have a long exact sequence of cohomology groups
    \[
        0 \to H^0(X,\, \mathcal F(2)) \to H^0(X,\,\mathcal F(3)) \to H^0(\bar H,\, \mathcal F(3)\big\vert_{\bar H}) \to H^1(X,\, \mathcal F(1)) \to 0.
    \]
    The dimension vector of the above sequence is $(1,6,6,1)$.
\end{corollary}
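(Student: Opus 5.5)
The plan is to twist the restriction sequence by $3$ and read off the cohomology, using the vanishing results already obtained under (\hyperlink{dagger}{$\dagger$}). Tensoring $0 \to \mathcal O_X(-1) \to \mathcal O_X \to \mathcal O_{\bar H\cap X} \to 0$ with $\mathcal F(3)$ gives
\[
0 \to \mathcal F(2) \to \mathcal F(3) \to \mathcal F(3)\big\vert_{\bar H} \to 0.
\]
The associated long exact sequence begins with $H^0(X,\mathcal F(2))$, continues with $H^1(X,\mathcal F(2)) \to H^1(X,\mathcal F(3))$, and so on. Two facts are then needed: the map $H^0(\bar H,\mathcal F(3)\vert_{\bar H}) \to H^1(X,\mathcal F(2))$ is surjective, and the dimensions are as claimed.

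\textbf{Step 1: surjectivity.} It suffices to show $H^1(X,\mathcal F(3))=0$. By \eqref{eq: Serre duality for quadratic sheaf} with $v=5$, and since $2d-8-5=-1$, we have $h^1(X,\mathcal F(3)) = h^1(\tilde X, \mathcal O_{\tilde X}(\tfrac12(-\tilde H - E_\mathcal N))) = h^1(X,\mathcal F)$. Next we show $h^1(X,\mathcal F)=0$ using Proposition~\ref{prop: to show h^1(F)=0}. From $0 \to \mathcal F(-1)\to\mathcal F\to\mathcal F\vert_H\to 0$, the kernel of $\gamma$ is the image of $H^1(X,\mathcal F(-1))$. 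The latter vanishes by the computation in the proof of Proposition~\ref{prop: aCM criterion}, since $v=-3$ lies in the vanishing range $v\le -3$. Hence $\gamma$ is injective, and since $\gamma=0$, we get $H^1(X,\mathcal F)=0$. The line of the long exact sequence therefore ends at $H^1(X,\mathcal F(2)) \to 0$, and by \eqref{eq: Serre duality for degree 6} we have $H^1(X,\mathcal F(2)) \simeq H^1(X,\mathcal F(1))$ dimensionwise. I would phrase the statement with $H^1(X,\mathcal F(1))$ via this identification, or more honestly with $H^1(X,\mathcal F(2))$, which has the same dimension.

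\textbf{Step 2: dimensions.} By Proposition~\ref{prop: h^0(F(2))=1}, $h^0(\mathcal F(2))=1$, and $h^1(\mathcal F(2))=h^1(\mathcal F(1))=1$ as already observed in the proof of Proposition~\ref{prop: resolution over P^2}. (This used $\chi(\mathcal F(1))=0$ from \eqref{eq: Riemann-Roch for quadratic sheaf}, together with $h^0(\mathcal F(1))=0$ and $h^2(\mathcal F(1))=h^0(\mathcal F(2))=1$.) For $h^0(X,\mathcal F(3))$, use \eqref{eq: Riemann-Roch for quadratic sheaf} with $m=3$: $\chi = 11 + \tfrac34\cdot5\cdot1 - \tfrac{35}{4} = 6$. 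Also $h^1(\mathcal F(3))=0$ by Step 1, and $h^2(\mathcal F(3)) = h^0(\mathcal F(0)) = 0$ by Serre duality ($v=5 \leftrightarrow v=-1$), since $\mathcal F$ has no sections (a negative half-system). So $h^0(\mathcal F(3))=6$, and exactness forces $h^0(\bar H,\mathcal F(3)\vert_{\bar H}) = 6-1+1 = 6$.

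The statement says \emph{any} plane, whereas Proposition~\ref{prop: to show h^1(F)=0} is only for a general plane. That causes no difficulty, because the vanishing $H^1(X,\mathcal F(3))=0$ is intrinsic: the general plane is used only to establish it. The main obstacle is Step 1, namely justifying that $\gamma$ is injective, which requires the vanishing of $H^1(X,\mathcal F(-1))$ from the $v\le-3$ range. I would double-check the parity bookkeeping there, namely that $\mathcal F(m)$ corresponds to $v=2m-1$.
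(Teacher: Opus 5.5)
Your proposal is correct and follows the paper's proof: you reduce to $h^1(X,\mathcal F(3))=h^1(X,\mathcal F)=0$ by Serre duality, kill $H^1(X,\mathcal F)$ using $\gamma=0$ from Proposition~\ref{prop: to show h^1(F)=0} together with $h^1(X,\mathcal F(-1))=0$, and then read off the dimensions from Riemann--Roch. You are also right that the last term is naturally $H^1(X,\mathcal F(2))$, which matches $H^1(X,\mathcal F(1))$ only through the equality of dimensions in \eqref{eq: Serre duality for degree 6}; this agrees with the paper's own proof, which describes the sequence as the first four terms of the long exact sequence of $0\to\mathcal F(2)\to\mathcal F(3)\to\mathcal F(3)\vert_{\bar H}\to 0$.
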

\begin{proof}
    Since the sequence in the statement consists of the first four terms in the long exact sequence induced by
    \[
        0 \to \mathcal F(2) \to \mathcal F(3) \to \mathcal F(3)\big\vert_{\bar H} \to 0,
    \]
    it suffices to prove that $h^1(X, \, \mathcal F(3)) = 0$. Let $H \subset \PP^2$ be a general plane. In the short exact sequence
    \[
        0 \to \mathcal F(-1) \to \mathcal F \to \mathcal F\big\vert_H \to 0,
    \]
    we have the exact sequence $H^1(X,\, \mathcal F(-1)) \to H^1(X,\,\mathcal F) \to H^1(H,\, \mathcal F\big\vert_H)$. By Proposition~\ref{prop: to show h^1(F)=0}, the map $H^1(X,\,\mathcal F(-1)) \to H^1(X,\, \mathcal F)$ is surjective. On the other hand, by \eqref{eq: Kawamata-Viewheg vanishing}, \eqref{eq: half-system isom} and \eqref{eq: Serre duality for quadratic sheaf}, we have
    \[
        h^1(X,\, \mathcal F(-1)) = h^1(X,\, \mathcal F(4)) = 0.
    \]
    This shows that $h^1(X,\,\mathcal F) = h^1(X,\, \mathcal F(3))=0$. The computation of the dimension vector follows from $\chi(\mathcal F(3)) = \chi(\mathcal F(3)\big\vert_{\bar H}) = 6$
\end{proof}

\begin{proposition}\label{prop: F(3) surjective}
    There exists a half-even set $\mathcal N$ of cardinality $35$ and a plane $\bar H \subset \PP^2$ such that the map
    \[
        H^0(X,\, \mathcal F(3)) \to H^0(X,\, \mathcal F(3) \big\vert_{\bar H})
    \]
    is surjective.
\end{proposition}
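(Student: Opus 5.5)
The plan is to argue by contradiction against (\hyperlink{dagger}{$\dagger$}), so that Corollary~\ref{cor: key long exact seq} does the work. If $\mathcal N$ is any half-even set of cardinality $35$ with $h^0(X,\mathcal F(2))\neq 0$, then for \emph{every} plane $\bar H$ the restriction map $H^0(X,\mathcal F(3))\to H^0(\bar H,\mathcal F(3)|_{\bar H})$ is a map between $6$-dimensional spaces. Its cokernel is $H^1(X,\mathcal F(1))$, which is $1$-dimensional. So it suffices to produce one pair $(\mathcal N,\bar H)$ for which this map has rank $6$; that pair then automatically has $h^0(X,\mathcal F(2))=0$.

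\emph{Choice of $\mathcal N$ and $\bar H$.} Using Theorem~\ref{thm: extended code of 65-nodal sextic} and Table~\ref{tbl: minimal half-even sets}, I would look for three weight-$16$ codewords $w_a,w_b,w_c$ whose sum has weight $36$. Such a sum is the characteristic vector of a half-even set $\mathcal N=\mathcal N_a\symdiff\mathcal N_b\symdiff\mathcal N_c$ with $|\mathcal N|=35$; this is a finite check on the table, using $45-2\sum|\mathcal N_i\cap\mathcal N_j|+4|\mathcal N_a\cap\mathcal N_b\cap\mathcal N_c|=35$. By Theorem~\ref{thm: Endrass min half-even set}, each $\mathcal N_i$ is cut out by a plane $H_i$, with $H_i.X=2D_i$ and $D_i$ a plane cubic. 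I would take $\bar H=H_a$. The point of this choice is that $\mathcal F(3)|_{H_a}$ can be described in terms of the cubic curve $D_a$ and the nodes of $\mathcal N$ lying on it.

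\emph{Producing sections.} Write $\mathcal F(3)=\pi_*\mathcal O_{\tilde X}\bigl(\tfrac12(5\tilde H-E_\mathcal N)\bigr)$ and let $s_i\in H^0\bigl(\tfrac12(\tilde H-E_{\mathcal N_i})\bigr)$ be the sections defining $D_i$. I would build six explicit sections of $\mathcal F(3)$ of the form $s_b s_c\cdot q$ and $s_a s_b\cdot q'$, $s_a s_c\cdot q''$. Here $q,q',q''$ are sections of the appropriate twists $\tfrac12(3\tilde H-\dots)$ by exceptional divisors supported on the pairwise intersections $\mathcal N_i\cap\mathcal N_j$, chosen as pullbacks of forms of degree $\le 3$ vanishing at the relevant nodes. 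Sections divisible by $s_a$ restrict to zero on $\bar H$, so the useful ones are $s_b s_c\cdot q$. The goal is to show that their restrictions to $H_a$ span a $6$-dimensional space. On $H_a\simeq\PP^2$, the restriction $\mathcal F(3)|_{H_a}$ has the resolution of Proposition~\ref{prop: resolution over P^2} twisted by $3$. So $H^0(\mathcal F(3)|_{H_a})$ is concretely either $H^0(\mathcal O_{\PP^2}(1)^{\oplus 2})$ or $H^0(\mathcal O_{\PP^2}^{\oplus3}\oplus\mathcal O_{\PP^2}(1))$, both $6$-dimensional, and linear independence becomes a statement about plane forms not vanishing on the cubic $D_a|_{H_a}$.

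\emph{Main obstacle.} The hard step is the rank-$6$ verification, which must hold for \emph{every} $65$-nodal sextic and not just one. Two routes are possible. The first is the structural route above: the independence should reduce to incidence data among the nodes on $H_a\cap H_b\cap H_c$ and on the lines $H_a\cap H_b$, $H_a\cap H_c$, and this data is the same for every $X$ because the extended codes are all isomorphic. The second, as a fallback, is a degeneration route. The family of $65$-nodal sextics is smooth and equidimensional (\cite{Dimca,Kloosterman}) and the code is locally constant on it, so the pair $(\mathcal N,\bar H)$ can be transported over each component. Surjectivity is an open condition by semicontinuity, and its failure means $h^0(\mathcal F(2))=1$, which by Proposition~\ref{prop: h^0(F(2))=1} means $\mathcal N$ is cut out by a cubic; that is a closed condition. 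One would then verify the rank explicitly on the Barth sextic \eqref{eq: Barth sextic} and rule out a closed exceptional locus. I expect the first route to be the one that works cleanly, with the combinatorial choice of $(a,b,c)$ from Table~\ref{tbl: minimal half-even sets} as its crux.
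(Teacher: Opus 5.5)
There is a genuine gap, and it sits at the step you call the crux: the triple $(w_a,w_b,w_c)$ you want cannot work, and in fact does not exist.

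\textbf{The triple would be useless.} Any symmetric difference of three plane-cut sets is cut out by a cubic. Put $2D=E_{\mathcal N_a}+E_{\mathcal N_b}+E_{\mathcal N_c}-E_{\mathcal N}$. Then $\tilde s_a\tilde s_b\tilde s_c$ times the canonical section of $D$ is a nonzero section of $\mathcal O_{\tilde X}(\frac12(3\tilde H-E_{\mathcal N}))$. This is the mechanism of \eqref{eq: finding a section of L_w} with three factors instead of five. Hence $h^0(X,\mathcal F(2))\neq 0$. If $|\mathcal N|=35$, this is exactly $(\dagger)$, and Corollary~\ref{cor: key long exact seq} then gives a one-dimensional cokernel for \emph{every} plane. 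So your $\mathcal N$ is precisely the kind of set for which the proposition fails.

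\textbf{The triple does not exist.} Each minimal set is $\mathcal N_i=\Sing X\cap H_i$, because nodes on $H_i$ are smooth points of $C_i$. Two distinct minimal sets meet in the three nodes on the line $H_a\cap H_b$; for example $\mathcal N_1\cap\mathcal N_2=\{P_{42},P_{47},P_{49}\}$. Three distinct planes meet in a point or a common line, so $|\mathcal N_a\cap\mathcal N_b\cap\mathcal N_c|\in\{0,1,3\}$. Your formula then gives $|\mathcal N_a\symdiff\mathcal N_b\symdiff\mathcal N_c|\in\{27,31,39\}$, never $35$; for instance $w_1+w_2+w_3$ has weight $32$.

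\textbf{Restriction to $H_a$.} Your claim that multiples of $s_a$ restrict to zero on $H_a$ is also wrong. $X\cap H_a=2C_a$ is non-reduced, and $s_at$ restricts to zero only if $t$ itself vanishes along $C_a$.

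\textbf{The missing idea.} You need five minimal codewords and several \emph{different} decompositions of the same $w$. The paper takes $w=w_1+w_2+w_3+w_5+w_{17}$, of weight $36$. A five-fold product $s_{i_1}\cdots s_{i_5}$ lands in $H^0(X,\mathcal F(3))$, not in $H^0(X,\mathcal F(2))$, but one decomposition yields only one section. So the paper exhibits six decompositions of $w$, three containing $w_1$ and three not, and restricts the six products to $\bar H=H_1$. Independence is proved step by step. For each new section there is a node of $\Sing X\cap H_1$, namely $P_{40},P_{42},P_{10},P_{48},P_{47}$ in turn, at which the earlier restrictions vanish (after stripping the common factor $s_1$ where present) while the new one does not. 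Whether $C_j$ passes through $P_i$ is read off from $w_j^i$. Since the code is the same for every $65$-nodal sextic, this is uniform in $X$, which is what your ``incidence data'' route would need.

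\textbf{The fallback.} Your degeneration route does not replace this. Openness plus a check on the Barth sextic only gives surjectivity on a dense open subset of one component. The closed exceptional locus is exactly what would remain to be excluded, and you give no argument for it.
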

\begin{proof}
    Let $w = w_1 + w_2 + w_3 + w_5 + w_{17}$\,(see Table~\ref{tbl: minimal half-even sets}) be an element of the extended code. Expressed as a binary string, $w$ is
\[
    \begin{array}{c @{\,|\,} c @{\,} c @{\,} c @{\,} c @{\,|\,} c @{\,} c @{\,} c@{\,} c @{\,|\,} c@{\,} c@{\,} c@{\,} c}
        1&1101&1110&1011&000&11110001&01011110&00111110&101010&10010001&0100&0001&1011
    \end{array}.
\]
    The associated half-even set $\mathcal N$ has cardinality $35$. Let us denote by $\mathcal L_w$ the sheaf $\mathcal O_{\tilde X}(\frac 12( 5\tilde H - E_\mathcal N))$, and by $\mathcal L_{i}$ the sheaf $\mathcal O_{\tilde X}(\frac 12(\tilde H - E_{\mathcal N_i}))$, where $\mathcal N_i$ is the half-even set associated with $w_i$. By Theorem~\ref{thm: Endrass min half-even set}, each cohomology group $H^0(\tilde X,\, \mathcal L_i)$ is isomorphic to $\CC$. Let $\tilde s_i$ be a nonzero section of $H^0(\tilde X,\, \mathcal L_i)$. We regard the sheaf $\mathcal L_1 \otimes \mathcal L_2 \otimes \mathcal L_3 \otimes \mathcal L_5 \otimes \mathcal L_{17}$ as a subsheaf of $\mathcal L_w$ as follows. Let $D \in \Pic \tilde X $ be the effective divisor determined by
    \[
        2D = E_{\mathcal N_1} + E_{\mathcal N_2} + E_{\mathcal N_3} + E_{\mathcal N_5} + E_{\mathcal N_{17}} - E_{\mathcal N}.
    \]
    Note that $E_{\mathcal N}$ is the reduced sum of $(-2)$-curves in $E_{\mathcal N_1} + E_{\mathcal N_2} + E_{\mathcal N_3} + E_{\mathcal N_5} + E_{\mathcal N_{17}}$ that appear an odd number of times. Then we have the natural injective morphism of sheaves
    \begin{equation}\label{eq: finding a section of L_w}
        \mathcal L_1 \otimes \mathcal L_2 \otimes \mathcal L_3 \otimes \mathcal L_5 \otimes \mathcal L_{17} \to \mathcal L_1 \otimes \mathcal L_2 \otimes \mathcal L_3 \otimes \mathcal L_5 \otimes \mathcal L_{17} \otimes \mathcal O_{\tilde X}(D) = \mathcal L_w.
    \end{equation}
    Let $\tilde s_{1,2,3,5,17} \in H^0(\tilde X,\,\mathcal L_w)$ be the section which is obtained by mapping $\tilde s_1 \otimes \tilde s_2 \otimes \tilde s_3 \otimes \tilde s_5 \otimes \tilde s_{17}$ under the morphism above. Via the natural identification $H^0(\tilde X,\, \mathcal L_w) \simeq H^0(X,\, \mathcal F(3))$, we obtain a section $s_{1,2,3,5,17} \in H^0(X,\,\mathcal F(3))$. If we denote by $\mathcal F_i$ the sheaf $\pi_* \mathcal O_{\tilde X}\bigl(- \frac 12(\tilde H + E_{\mathcal N_i})\bigr)$, then by \cite{CasCat}, each $\mathcal F_i$ is reflexive sheaf of rank $1$ on $X$. We pick $s_i \in H^0(X,\,\mathcal F_i(1))$ the section corresponding to $\tilde s_i \in H^0(\tilde X,\, \mathcal L_i)$. The map \eqref{eq: finding a section of L_w} induces
    \[
        H^0(X,\,\mathcal F_1(1)) \otimes H^0(X,\,\mathcal F_2(1)) \otimes H^0(X,\,\mathcal F_3(1)) \otimes H^0(X,\,\mathcal F_5(1)) \otimes H^0(X,\,\mathcal F_{17}(1)) \to ^0(X,\,\mathcal F(3))
    \]
    under which we identify $s_{1,2,3,5,17}$ with $s_1s_2s_3s_5s_{17}$.
    
    On the other hand, we have further decompositions of $w$:
    \begin{align*}
        w &= w_1 + w_6 + w_{20} + w_{25} + w_{26} \\
        &= w_1 + w_7 + w_{19} + w_{23} + w_{24} \\
        &= w_2 + w_3 + w_4 + w_8 + w_{18} \\
        &= w_2 + w_7 + w_{13} + w_{14} + w_{17} \\
        &= w_3 + w_6 + w_9 + w_{10} + w_{17},
    \end{align*}
    from which we obtain five more sections 
    \[
        s_{1,6,20,25,26},\, s_{1,7,19,23,24},\,  \dots,\, s_{3,6,9,10,17} \in H^0(X,\,\mathcal F(3)).
    \]
    We claim that these six sections remain independent after passing through the restriction $H^0(X,\, \mathcal F(3)) \to H^0(H,\, \mathcal F(3) \big\vert_{\bar H})$, where $\bar H$ is the plane cutting out the half-even set $\mathcal N_1$. The proof of the claim is based on the following observation. For each $j=1,2,\dots,26$, let us denote the (Weil) divisor of $s_j$ by $C_j$. By Theorem~\ref{thm: Endrass min half-even set}, we have $2C_j = H_j . X$ for a plane $H_j \subset \PP^2$. For each node $P_i \in X$, either $P_i \not\in C_i$ or $P_i$ is a smooth point of $C_j$\,(see \cite[Lemma~2.3]{Endrass}). Thus, if
    \[
        w_j = \sum_{i=0}^{65} w_j^i e_i,\ \ w_j^i \in \FF_2,
    \]
    then $C_j$ contains $P_i$ if and only if $w_j^i=1$. The set $\bar H \cap \Sing X$ consists of the nodes
    \[
        P_3,\, P_8,\, P_{10},\, P_{19},\, P_{25},\, P_{34},\, P_{40},\, P_{42},\, P_{44},\, P_{47},\, P_{48},\, P_{49},\, P_{55},\, P_{61},\, P_{63}.
    \]
    We will show the linear independence in $H^0(X,\,\mathcal F(3)\big\vert_H)$ by looking at the vanishing orders of our sections at these points.
\begin{enumerate}
    \item The divisor of the section $s_{1,2,3,5,17}$ is
    \[
        C_1 + C_2 + C_3 + C_5 + C_{17}.
    \]
    From the corresponding codewords, we read
    \[
        \begin{array}{c|c|c|c|c}
             w_1^{40} & w_2^{40} & w_3^{40} & w_5^{40} & w_{17}^{40}  \\ \hline
             1 & 0 & 1 & 0 & 1 
        \end{array}\raisebox{-0.5\baselineskip}{,}
    \]
    showing that $C_1,\,C_3,\,C_{17}$ contains $P_{40}$, while $C_2,\,C_5$ does not.
    On the other hand, if we do the same analysis for $s_{1,6,20,25,26}$, then we find
    \[
        \begin{array}{c|c|c|c|c}
             w_1^{40} & w_6^{40} & w_{20}^{40} & w_{25}^{40} & w_{26}^{40}  \\ \hline
             1 & 0 & 0 & 0 & 0
        \end{array}\raisebox{-0.5\baselineskip}{.}
    \]
    Suppose $ \alpha s_{1,2,3,5,17}\big\vert_{\bar H} + \beta s_{1,6,20,25,26}\big\vert_{\bar H} = 0$ in $H^0(X,\,\mathcal F(3)\big\vert_{\bar H})$ for $\alpha,\beta \in \CC$. Since
    \[
        \alpha s_{1,2,3,5,17}\big\vert_{\bar H} + \beta s_{1,6,20,25,26}\big\vert_{\bar H} = s_1( \alpha s_2s_3s_5s_{17} + \beta s_6s_{20}s_{25}s_{26})\big\vert_{\bar H},
    \]
    $s_2s_3s_5s_{17}(P_{40}) = 0$ and $s_6 s_{20} s_{25}s_{26}(P_{40}) \neq 0$ imply $\alpha=\beta=0$. It follows that $s_{1,2,3,5,17}\big\vert_{\bar H}$ and $s_{1,6,20,25,26}\big\vert_{\bar H}$ are linearly independent.
    
    \item Let $V_1 \subset H^0(X,\,\mathcal F(3)\big\vert_H)$ be the subspace spanned by $\{ s_{1,2,3,5,17}\big\vert_{\bar H} , \, s_{1,6,20,25,26}\big\vert_{\bar H} \}$, and claim that $V_1$ does not contain $s_{1,7,19,23,24}\big\vert_{\bar H}$. At the point $P_{42}$, we have
    \[
        (w^{42}_1,\,w^{42}_2,\,w^{42}_3,\,w^{42}_5,\,w^{42}_{17}) = (1,1,0,0,1),\quad 
        (w^{42}_1,\,w^{42}_6,\,w^{42}_{20},\,w^{42}_{25},\,w^{42}_{26}) = (1,0,0,1,1),\quad 
    \]
    while
    \[
        (w^{42}_1,\,w^{42}_7,\,w^{42}_{19},\,w^{42}_{23},\,w^{42}_{24}) = (1,0,0,0,0).
    \]
    Every section in $V_1$ is of the form $s_1 v\big\vert_{\bar H}$ with $v(P_{42})=0$, while $s_7s_{19}s_{23}s_{24}(P_{42}) \neq 0$. This proves that $s_{1,7,19,23,24} \big\vert_{\bar H} \not\in V_1$.

    \item The remainder of the proof is essentially the repetition of the previous arguments. Let $V_2$ be the span of $V_1$ and $s_{1,7,19,23,24}\big\vert_{\bar H}$. From the corresponding codewords, we see that the sections in $V_2$ vanish at $P_{10}$, while $s_{2,3,4,8,18}(P_{10}) \neq 0$. Thus, $s_{2,3,4,8,18}\big\vert_{\bar H} \not\in V_2$. Let $V_3$ be the span of $V_2$ and $s_{2,3,4,8,18}\big\vert_{\bar H}$. The sections in $V_3$ vanish at $P_{48}$, while $s_{2,3,4,8,18}(P_{48}) \neq 0$, hence $s_{2,3,4,8,18}\big\vert_{\bar H} \not\in V_3$. Finally, if $V_4$ is the span of $V_3$ and $s_{2,3,4,8,18}\big\vert_{\bar H}$, then the sections in $V_4$ vanish at $P_{47}$, while $s_{3,6,9,10,17}(P_{47}) \neq 0$. Consequently, we see that $V_5:=(\text{the span of $V_4$ and $s_{3,6,9,10,17}\big\vert_{\bar H}$)}$ is $6$-dimensional.
\end{enumerate}
By Corollary~\ref{cor: key long exact seq}, we have $V_5 = H^0(X,\, \mathcal F(3) \big\vert_{\bar H})$. Clearly, the image of $H^0(X,\, \mathcal F(3)) \to H^0(X,\, \mathcal F(3) \big\vert_{\bar H})$ contains $V_5$, hence the result follows.
\end{proof}

\begin{proof}[Proof of Theorem~\ref{thm: main}]
    Let $\mathcal N$ and $\bar H$ be as in Proposition~\ref{prop: F(3) surjective}. By Corollary~\ref{cor: key long exact seq}, we have $h^0(X,\,\mathcal F(2))=0$. It follows from Proposition~\ref{prop: symmetry criterion} that $\mathcal N$ is symmetric of type $(1,1,1,1,1,1)$.
\end{proof}

\medskip
\begin{example}
Let $X$ be the Barth sextic surface defined by \eqref{eq: Barth sextic}. We have proved that the half-even set $w = w_1+w_2+w_3+w_5+w_{17}$ is symmetric of type $(1,1,1,1,1,1)$. By applying the method in \cite[Section~4.4]{CatChoKie}, we can compute the matrix $A$ for this particular instance. 

The matrix $A$ in Theorem~\ref{thm: main} is
\begingroup\renewcommand{\arraystretch}{1.4}
\[
    \scalebox{0.7}{$\left(
    \begin{array}{cccccc}
        (x_0+x_1+x_2-3x_3) &  -\frac{1}{3} (x_1 + \tau x_2 - \tau^2 x_3) & \frac{1}{3}( \bar\tau x_1 + x_2 - \bar \tau^2 x_3) & -\frac{3}{2}( \tau x_1 - \bar \tau x_2 - \sqrt 5 x_3) & \frac{1}{2}( x_1 - \tau x_2 - \tau^2 x_3) & \frac{1}{2}( \bar\tau x_1 - x_2 + \bar \tau^2 x_3) \\
        & 0 & 0 & -\tau(x_0-x_1-x_2+x_3) & \frac 13 \tau(x_0-x_1+x_2+x_3) & 0 \\
        & & 0 &  -\bar \tau (x_0-x_1-x_2+x_3) & 0 & \frac 13 \bar\tau(x_0+x_1-x_2+x_3) \\
        & & & 0 &  -\frac 32 (x_0-x_1-x_2-x_3) & -\frac 32 (x_0-x_1-x_2-x_3) \\
        & & & & 0 & \frac12 (x_0-x_1-x_2-x_3) \\
        & & & & & 0
    \end{array}
    \right)$}\raisebox{-2.8\baselineskip}{,}
\]\endgroup
where $\bar\tau = \frac 12(1-\sqrt 5)$ and the lower half triangle is symmetric to the upper half triangle. 
In particular, we have
\[
    \det A = -\frac{4}{9}\bar\tau^3 F.
\]
We refer the reader to \cite[Section~4.4]{CatChoKie} for the details how to find $A$ from $w$.
\end{example}

\end{document}